\theoremstyle{definition}
\newtheorem{thm}{Theorem}[section]
\theoremstyle{definition}
\newtheorem{defn}[thm]{Definition}
\theoremstyle{definition}
\newtheorem{ex}[thm]{Example}
\theoremstyle{definition}
\newtheorem{lem}[thm]{Lemma}
\theoremstyle{definition}
\newtheorem{cor}[thm]{Corollary}
\newtheorem{prop}[thm]{Proposition}
\title{Unique Factorization in Polynomial Rings with Zero Divisors}
\author{D.D. Anderson and Ranthony A.C. Edmonds}
\begin{document}

\begin{abstract} Given a certain factorization property of a ring $R$, we can ask if this property extends to the polynomial ring over $R$ or vice versa. For example, it is well known that $R$ is a unique factorization domain if and only if $R[X]$ is a unique factorization domain. If $R$ is not a domain, this is no longer true. In this paper we survey unique factorization in commutative rings with zero divisors, and characterize when a polynomial ring over an arbitrary commutative ring has unique factorization. 
\end{abstract}

\keywords{unique factorization ring, irreducible element, polynomial ring}

\subjclass[2010]{13B25(primary), and 13A05, 13F15 (secondary)}

\maketitle

\scalefont{1}
\section{Introduction}

Let $D$ be an integral domain. It is well known that the polynomial ring $D[X]$ is a unique factorization domain (UFD) if and only if $D$ is a UFD. Of course $D$ is a UFD if (1) every nonzero nonunit of $D$ is a finite product of irreducible elements and (2) if $a_1 \cdots a_n=b_1 \cdots b_m$ where $a_i,b_j$ are irreducible, then $n=m$ and after re-ordering if necessary $a_i$ and $b_i$ are associates. Equivalently, $D$ is a UFD if each nonzero nonunit of $D$ is a finite product of principal primes.

Suppose that we allow our commutative ring $R$ to have zero divisors. We consider the question: is $R[X]$ a unique factorization ring if and only if $R$ is? Now there are several ways to define a ``unique factorization ring", all of which agree in the domain case. First, there are a number of ways to define ``irreducible" element and the notion of ``associates", see Section 2 for details. We define $R$ to be a (Bouvier\cite{B2}-Galovich\cite{G}) unique factorization ring if (1) and (2) defined in the first paragraph hold, see Definition 5.3. A related type of unique factorization ring, called weak unique factorization rings, were considered in \cite{UFRZD1}. Fletcher \cite{UFR} defined another type of unique factorization ring and several types of ``reduced unique factorization rings" were investigated in \cite{CAV}. We can also consider rings, called factorial rings, in which the nonunit regular elements have unique factorization into irreducibles.

Let $R$ be a commutative ring and $X$ an indeterminate over $R$. The main purpose of this article is to determine when the polynomial ring $R[X]$ has some form of unique factorization. We determine when $R[X]$ is a factorial ring, a unique factorization ring, a weak unique factorization ring, a Fletcher unique factorization ring, or a [strong] ($\mu-)$ reduced unique factorization ring, see Section 5. Unlike the domain case, if a commutative ring $R$ has one of these types of unique factorization, $R[X]$ need not. In Section 6 we examine the good and bad behavior of factorization in $R[X]$ where $R$ is one of these types of unique factorization rings.

In Section 2 we begin with a brief review of factorization in integral domains and commutative rings with zero divisors. The various types of irreducible elements and associate relations are defined. Section 3 reviews some basic facts about polynomial rings that will be used throughout the paper. We also discuss ``irreducible" elements of $R[X]$.

Section 4 involves the factorization of powers of an indeterminate $X$ over a commutative ring $R$. It is shown (Theorem 4.3) that $X$ is a product of irreducible elements (resp., principal primes) if and only if $R$ is a finite direct product of indecomposable rings (resp., integral domains). In the case where $X$ is a product of irreducibles, this factorization is unique up to order and associates (Theorem 4.3) while each power of $X^n$ has unique factorization into irreducibles if and only if $R$ is reduced and a finite direct product of indecomposable rings (Theorem 4.5). 

Throughout this paper all rings will be commutative with an identity. Suppose that $R$ is a commutative ring. We denote the Jacobson radical, nilradical, the set of zero divisors, and the set of idempotents of $R$ by $J(R)$, $nil(R)$, $Z(R)$, and $Id(R)$, respectively. An element is \emph{regular} if it is not a zero divisor.

\

%%%%
\section{A Brief Review of Factorization}

%2.1

In this section we first give a very brief review of factorization in an integral domain and then give a slightly longer review of factorization in a commutative ring with zero divisors. 

Let $D$ be an integral domain. Two elements $a,b \in D$ are \emph{associates}, denoted $a \sim b$, if $a \mid b$ and $b \mid a$ which is equivalent to $Da=Db$ or to $a=ub$ for some unit $u \in D$. An element $a \in D$ is \emph{irreducible} or an \emph{atom} if $a$ is a nonzero nonunit and for $b,c \in D$, $a=bc$ implies $b$ or $c$ is a unit of $D$. It is easy to see that for a nonzero nounit $a \in D$, the following conditions are equivalent: (1) $a$ is irreducible, (2) if $a=bc$ for $b,c \in D$, then $a \sim b$ or $a \sim c$, and (3) $Da$ is a maximal element of the set of proper principal ideals of $D$. 

An integral domain is \emph{atomic} if every nonzero nonunit of $D$ is a finite product of atoms while $D$ satisfies the \emph{ascending chain condition on principal ideals (ACCP)} if every ascending chain of principal ideals becomes stationary. It is well known that if $D$ satisfies ACCP then $D$ is atomic, but the converse need not hold. For a review of factorization in an integral domain the reader is referred to \cite{AAZ} and \cite{GHK}.

%2.2

The terminology and general theory of factorization for commutative rings with zero divisors is less standard. A general approach to factorization in commutative rings is given in \cite{AV1}. Also see \cite{AAV2} and \cite{AV2}. We review some of the details. Let $R$ be a commutative ring. Two elements, $a,b \in R$ are \emph{associates}, denoted $a \sim b$, (resp., \emph{strong associates}, denoted $a \approx b$, \emph{very strong associates}, denoted $a \cong b$) if $a \mid b$ and $b \mid a$, or equivalently $Ra=Rb$ (resp., $a=ub$ for some unit $u \in R$, $a \sim b$ and either $a=b=0$ or $a \neq 0$ and $a=rb$ for $r \in R$ implies that $r$ is a unit in $R$). For $a,b \in R$ we have $a \cong b \implies a \approx b \implies a \sim b$, but none of these implications can be reversed. While $\sim$ and $\approx$ are congruences on the monoid $(R,\cdot)$, $\cong$ is reflexive on $R$ and hence a congruence on $(R,\cdot)$ if and only if $R$ is \emph{pr\`esimplifiable}, that is, each element $x$ of $R$ is \emph{pr\`esimplifiable}: $x=xy$ for $y \in R \implies x=0$ or $y$ is a unit of $R$. 

In \cite{AC2} the last two associate relations were generalized as follows. The elements $a,b \in R$ are \emph{strongly regular associates}, denoted $a \approx_r b$ (resp., \emph{very strongly regular} \emph{associates}, denoted $a \cong_r b$,) if $a=rb$ and $b=sa$ where $r,s \in R$ are regular (resp., $a \sim b$ and either $a=b=0$ or $a \neq 0$ and $a=rb$ for $r \in R$ implies that $r$ is regular.) Finally, $R$ is \emph{weakly pr\`esimplifiable} if for $x, y \in R$, $x=xy$ implies $x=0$ or $y$ is regular.

%2.3 

Using the three different associate relations, we can define three different types of irreducible elements. A nonunit $a \in R$ (with possibly $a=0$) is \emph{irreducible} or an \emph{atom} (resp., \emph{strongly irreducible}, \emph{very strongly irreducible}) if for $a=bc$ with $b,c \in R$, $a \sim b$ or $a \sim c$ (resp., $a \approx b$ or $a \approx c$, $a \cong b$ or $a \cong c$). The nonunit $a \in R$ is \emph{$m$-irreducible} if $Ra$ is a maximal element of the set of proper principal ideals of $R$. Note that the following are equivalent: (1) $R$ is an integral domain, (2) $0$ is prime, (3) $0$ is irreducible, (4) $0$ is strongly irreducible, and (5) $0$ is very strongly irreducible. But $0$ is $m$-irreducible if and only if $R$ is a field. 

Let $R$ be a commutative ring. A nonzero nonunit $p \in R$ is \emph{weakly prime} \cite{UFRZD1} if $p \mid ab \neq 0$, $a,b \in R$, implies $p \mid a$ or $p \mid b$. Certainly a prime element is weakly prime and a weakly prime element is irreducible. Moreover, a weakly prime element $p$ is either prime or satisfies $p^2=0$. For suppose that $p$ is weakly prime and $p^2 \neq 0$. Suppose that $p \mid ab$. If $ab \neq 0$, $p \mid a$ or $p \mid b$. So suppose that $ab=0$. Now $p \mid a(b+p)$, so $p \mid a$ or $p \mid b+p$ and hence $p \mid b$ unless $a(b+p)=0$. But $a(b+p)=0$ gives $ap=0$. Likewise we can assume that $bp=0$. But then $(a+p)(b+p)=p^2\neq 0$. So $p \mid a+p$ or $p \mid b+p$ and hence $p \mid a$ or $p \mid b$. Thus $p$ is prime. Hence a regular weakly prime element is prime. Also, if $R$ is not indecomposable, a nonzero weakly prime element $p$ is prime. For if $p=(p_1,p_2) \in R=R_1 \times R_2$ is weakly prime but not prime, $p^2=0$ gives $p_1^2=p_2^2=0$. Hence $p \mid (p_1,1)(1,p_2)$ but $p \centernot\mid (p_1,1), (1, p_2)$. For more on weakly prime elements, see \cite{WPD}.

For a nonzero element of $R$ we have the following implications, none of which can be reversed:

\

\vspace{-5mm}
\[
\xymatrix@C=0.45cm@1{ 
& & & \text{prime} \ar@{=>}[d]\\
& & &\text{weakly prime} \ar@{=>}[d]\\
\text{very strongly irreducible } \ar@{=>}[r] &\text{ $m$-irreducible} \ar@{=>}[r] &\text{ strongly irreducible} \ar@{=>}[r] &\text{ irreducible}}\\
\] 

\vspace{7mm}

The following theorem summarizes some useful facts about irreducible elements.
%thm2.1
\begin{thm} Let $R$ be a commutative ring. 

\noindent
\begin{enumerate}
\item For regular elements, or more generally nonzero pr\`esimplifiable elements, the four types of irreducible elements coincide.
\item For $a \in R$, the following are equivalent:
	\begin{enumerate}
	\item $a$ is irreducible,
	\item there is a prime ideal $P$ of $R$ with $Ra$ a maximal element of the set of principal ideals of $R$ contained in $P$, and 
	\item either (i)  $a$ is regular and $a$ is $m$-irreducible or (ii) $a$ is a zero divisor and $Ra$ is a maximal element of the set of principal ideals of $R$ contained in $Z(R)$.
	\end{enumerate}
\item If $a_1, a_2 \in R$ with $a_1$ irreducible and $Ra_1 \subsetneq Ra_2 \subsetneq R$, then $a_1$ is a zero divisor and $a_2$ is regular.
\item A nonzero nonunit $a \in R$ is very strongly irreducible if and only if for $b,c \in R$ with $a=bc$, either $b$ or $c$ is a unit.
\item Suppose that $0 \neq a \in R$ is very strongly irreducible. Then $ann(a) \subseteq J(R)$. Hence if $J(R)=0$, $a$ is regular.
\item For $0 \neq a \in R$, $a$ is $m$-irreducible if and only if either (i) $a$ is very strongly irreducible or (ii) $Ra$ is an (idempotent) maximal ideal. Thus for $R$ indecomposable, $a$ is $m$-irreducible if and only if it is very strongly irreducible.
%2.5
\item Let $\{R_{\alpha}\}_{\alpha \in \Lambda}$ be a nonempty family of commutative rings and let $(a_\alpha) \in \prod_{\alpha \in \Lambda} R_{\alpha}$. Then $(a_{\alpha})$ is irreducible (resp., strongly irreducible, $m$-irreduc-\\ ible, prime) if and only if each $(a_{\alpha})$ except for one $\alpha_0 \in \Lambda$ is a unit in $R_{\alpha}$ and that $a_{\alpha_0}$ is irreducible (resp., strongly irreducible, $m$-irreducible, prime.) However, $(a_{\alpha_0})$ is very strongly irreducible if and only if each $a_{\alpha}$ except for one $\alpha_0 \in \Lambda$ is a unit in $R_{\alpha}$ and that $a_{\alpha_0}$ is very strongly irreducible in $R_{\alpha_0}$ and is nonzero unless $|\Lambda|=1$ and $R_{\alpha_0}$ is an integral domain.\\
\end{enumerate}

\end{thm}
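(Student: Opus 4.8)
The plan is to prove the parts in the order (4), (1), (3), (2), (5), (6), (7), so that each may invoke its predecessors and the implication diagram displayed above. Part (4) is the foundational unwinding: for a nonzero nonunit $a$, I would show directly that $a$ is very strongly irreducible if and only if $a=bc$ forces $b$ or $c$ to be a unit. The forward direction is immediate from reading $a=cb$ through the definition of $\cong$; the reverse rests on the observation that the factorization hypothesis is self-strengthening -- given $a=bc$ with, say, $c$ a unit, any relation $a=rb$ is itself a factorization, so $r$ or $b$ is a unit, and $b$ cannot be a unit lest $a$ be one. Part (1) then follows quickly: a regular element is pr\`esimplifiable, and for a nonzero pr\`esimplifiable $a$ the relation $a\sim b$ upgrades to $a\cong b$ (write $a=rb$, $b=sa$, so $a=rsa$ and pr\`esimplifiability makes $rs$, hence $r$, a unit). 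Feeding this into (4) shows an irreducible pr\`esimplifiable $a$ is very strongly irreducible, and the displayed chain very strongly irreducible $\Rightarrow$ $m$-irreducible $\Rightarrow$ strongly irreducible $\Rightarrow$ irreducible collapses all four notions.

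For part (3) I would write $a_1 = a_2 d$ (as $a_2 \mid a_1$); irreducibility together with $Ra_1 \subsetneq Ra_2$ forces $a_1 \sim d$. Writing $d = u a_1$ gives $(1 - a_2 u)a_1 = 0$ with $1 - a_2 u \neq 0$ (else $a_2$ is a unit), so $a_1$ is a zero divisor. The delicate half -- and the step I expect to be the main obstacle -- is that $a_2$ is regular. Here I would suppose $a_2 f = 0$ with $f \neq 0$, note $f \in ann(a_1)=ann(d)$, and exploit the perturbed factorization $a_1 = a_2(d+f)$: irreducibility gives $a_1 \sim d+f$, whence $f \in Rd$, say $f = wd$; but then $w a_1 = a_2 f = 0$, so $w \in ann(d)$ and $f = wd = 0$, a contradiction. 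With (3) in hand, part (2) is a clean cycle $(a)\Rightarrow(c)\Rightarrow(b)\Rightarrow(a)$: for $(a)\Rightarrow(c)$ I split on whether $a$ is regular (use (1) to get $m$-irreducible) or a zero divisor (use (3): any principal ideal strictly above $Ra$ has a regular generator, so cannot sit inside $Z(R)$, giving maximality of $Ra$ among principal ideals in $Z(R)$); for $(c)\Rightarrow(b)$ I take a prime over $Ra$ in the regular case and a prime $P$ with $a \in P \subseteq Z(R)$ (a standard fact about $Z(R)$) in the zero-divisor case; and $(b)\Rightarrow(a)$ is the short argument that $a = bc \in P$ puts $b$ or $c$ in $P$, forcing an associate by maximality.

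Part (5) is again a one-line consequence of (4): for $x \in ann(a)$ and any $r$, the identity $a = a(1+rx)$ is a factorization, so $1+rx$ is a unit (as $a$ is not), and $1+rx$ being a unit for every $r$ is exactly the statement $x \in J(R)$; hence $ann(a)\subseteq J(R)$, and $J(R)=0$ forces $a$ regular. Part (6) is a dichotomy: one implication is the diagram (very strongly irreducible $\Rightarrow$ $m$-irreducible) together with the triviality that a maximal ideal is maximal among proper principal ideals. For the converse I would assume $a$ is $m$-irreducible but, via (4), not very strongly irreducible, producing $a=bc$ with $b,c$ nonunits; $m$-irreducibility then forces $Ra=Rb=Rc$, whence $a \in Ra^2$ and one extracts an idempotent $e$ with $Ra = Re$. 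Decomposing $R = Re \times R(1-e)$ and re-reading $m$-irreducibility shows every nonunit of $R(1-e)$ vanishes, i.e. $R/Ra \cong R(1-e)$ is a field, so $Ra$ is an idempotent maximal ideal; extracting this idempotent cleanly is the technical heart of (6). Finally, part (7) is a coordinatewise analysis using that units, divisibility, and $\sim$ (resp. $\approx$, principal ideals, prime ideals) are all detected coordinatewise, and that the full product carries the coordinate idempotents even for infinite $\Lambda$: two nonunit coordinates always yield a nontrivial factorization (or an idempotent splitting a principal or prime ideal), so exactly one coordinate $\alpha_0$ can be a nonunit and must carry the named property. The only genuine wrinkle, which I would treat separately, is the very strongly irreducible case: a zero $\alpha_0$-coordinate admits the factorization $0 = 0\cdot 0$ in a way that survives in the product whenever $|\Lambda|\ge 2$, and this is precisely the source of the stated ``nonzero unless $|\Lambda|=1$ and $R_{\alpha_0}$ is an integral domain'' caveat.
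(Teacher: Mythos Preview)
Your proposal is correct throughout; each argument holds up under scrutiny, including the delicate ``$a_2$ is regular'' step in (3) via the perturbed factorization $a_1=a_2(d+f)$, and the idempotent extraction in (6) from $a\in Ra^2$.

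The paper's own proof, however, takes a very different route: for parts (2), (3), (4), (6), and (7) it simply cites the original sources (\cite{AV1}, \cite{AC1}, \cite{CA}), giving no argument at all; only (1) and (5) receive in-text proofs, and those are the same one-line arguments you give. So the genuine difference is that you supply direct, self-contained proofs for the five parts the paper outsources. What this buys you is independence from the cited literature and a clear view of the logical dependencies (e.g.\ your use of (4) as the engine behind (1), (5), and (6), and your use of (3) to drive the zero-divisor half of (2)). What the paper's approach buys is brevity and an explicit pointer to where each fact was first established. Neither approach is wrong; yours is the one a reader would want if the references were unavailable.
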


\begin{proof}
\
\begin{enumerate}
\item If $x$ is a nonzero pr\`esimplifiable element, then $x \sim y \iff x \cong y$. So $x$ is irreducible if and only if it is very strongly irreducible.
\item $(a) \iff (b)$ \citep[Theorem 2.14]{AV1}. $(a) \iff (c)$ \citep[Corollary 1]{AC1}.
\item \citep[Theorem 1]{AC1}
\item \citep[Theorem 2.5]{AV1}
\item Suppose that $0 \neq a \in R$ is very strongly irreducible. Let $c \in ann(a)$. For $d \in R,$ $a=a(1+dc)$. Hence $1+dc$ is a unit for each $d \in R$; so $c \in J(R)$. 
\item \cite[Theorem 2.9]{CA}.
\item \cite[Theorem 2.15]{AV1}.
\end{enumerate}
\end{proof}

%2.6

Each of the forms of irreducibility leads to a form of atomicity. The commutative ring $R$ is \emph{atomic} (resp., \emph{strongly atomic}, \emph{very strongly atomic}, \emph{$m$-atomic}, \emph{$p$-atomic}) if each nonzero nonunit of $R$ is a finite product of irreducible (resp., strongly irreducible, very strongly irreducible, $m$-irreducible, prime) elements of $R$. Note that if $R$ is not a domain, then $0$ too is a finite product of the appropriate type of irreducible elements. We collect some facts about atomic rings.\\

\begin{thm} Let $R$ be a commutative ring,
\begin{enumerate}
\item $R$ very strongly atomic $\implies$ $R$ $m$-atomic $\implies$ $R$ strongly atomic $\implies$ $R$ atomic; $R$ $p$-atomic $\implies$ $R$ strongly atomic and $R$ satisfies ACCP; and $R$ satisfies ACCP $\implies$ $R$ is atomic. However, none of these implications can be reversed.
\item Suppose that $R$ is indecomposable. Then $R$ is very strongly atomic $\iff$ $R$ is $m$-atomic.
\item $R$ is $p$-atomic $\iff$ $R$ is a finite direct product of UFDs and SPIRs.
\item Suppose that $0$ is a product of $n$, $n \geq 1$, irreducible elements. Then $R$ is a direct product of at most $n$ indecomposable rings. 
\item Suppose that $\{R_{\alpha}\}_{\alpha \in \Lambda}$ is a nonempty family of commutative rings. If $\prod_{\alpha \in \Lambda} R_{\alpha}$ satisfies ACCP or any of the forms of atomicity, then $\Lambda$ is finite.
%2.7
\item Let $R_1, \hdots, R_n$ be commutative rings. 
	\begin{enumerate}
	\item $R_1 \times \cdots \times R_n$ satisfies ACCP (resp., is atomic, strongly atomic, $p$-atomic) if and only if each $R_i$ satisfies ACCP (resp., is atomic, strongly atomic, $p$-atomic),
	\item $R_1 \times \cdots \times R_n$ is $m$-atomic if and only if each $R_i$ is $m$-atomic and if $n>1$ and some $R_{i_0}$ is an integral domain, then $R_{i_0}$ must be a field.
	\item $R_1 \times \cdots \times R_n$ is very strongly atomic if and only if each $R_i$ is very strongly atomic and if some $R_{i_0}$ is an integral domain we must have $n=1$.
	\end{enumerate}
\item If $R$ satisfies ACCP (resp., is atomic, strongly atomic, $m$-atomic, very strongly atomic, $p$-atomic), then $R$ is a finite direct product of indecomposable rings satisfying ACCP (resp., which are atomic, strongly atomic, $m$-atomic, very strongly atomic, $p$-atomic).
\end{enumerate}
\end{thm}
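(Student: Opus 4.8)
The plan is to derive everything from the structural description of irreducible elements in Theorem 2.1---above all the product criterion Theorem 2.1(7), the identity of $m$-irreducible and very strongly irreducible elements in the indecomposable case (Theorem 2.1(6)), and the equivalences preceding Theorem 2.1 that record when $0$ is irreducible of a given type---together with careful attention to how $0$ must factor once $R$ is not a domain. I would dispose of (1) and (2) first. The implications very strongly atomic $\Rightarrow m$-atomic $\Rightarrow$ strongly atomic $\Rightarrow$ atomic are immediate from the implication diagram preceding Theorem 2.1, since a factorization into the stronger kind of irreducible is in particular one into the weaker kind, and the same remark covers the forced factorization of $0$. That $p$-atomic $\Rightarrow$ ACCP follows from a length count on prime factorizations (or from part (3)), and $p$-atomic $\Rightarrow$ strongly atomic I would read off from part (3): a $p$-atomic ring is a finite product of UFDs and SPIRs, in which every prime is strongly irreducible (the nonzero primes of a UFD or SPIR are presimplifiable, so strongly irreducible by Theorem 2.1(1), and Theorem 2.1(7) lifts this to the product), so a factorization into primes is a factorization into strongly irreducibles. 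Part (2) is then immediate from Theorem 2.1(6). For non-reversibility I would exhibit witnesses: $\mathbb{Z}[\sqrt{-5}]$ is very strongly atomic and satisfies ACCP yet is not $p$-atomic, classical atomic domains fail ACCP, and the separations among the four zero-divisor notions are realized by examples in the factorization literature cited above.

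Parts (4), (5), and (7) are the decomposition statements, all powered by Theorem 2.1(7). For (4) I would induct on $n$. If $R$ is indecomposable we are done; otherwise write $R=Re\times R(1-e)$ for a nontrivial idempotent $e$. By Theorem 2.1(7) each factor $a_i$ of $0=a_1\cdots a_n$ has a unit image in one of $Re,R(1-e)$ and an irreducible image in the other, so projecting and discarding units writes $0$ as a product of $n_1\ge 1$ irreducibles in $Re$ and $n_2\ge 1$ in $R(1-e)$ with $n_1+n_2=n$ (each side is nonzero, so not all images can be units); the inductive hypothesis then yields at most $n_1+n_2=n$ indecomposable factors. Part (5) is the remark that in an infinite product $0$ cannot be a finite product of irreducibles: by Theorem 2.1(7) each irreducible has all but one coordinate a unit, so any finite product has a nonzero coordinate at all but finitely many indices, contradicting that $0$ is zero in every coordinate; this rules out atomicity of every kind, and since ACCP $\Rightarrow$ atomic (or directly, via the strictly ascending chain $(e_1)\subsetneq(e_1+e_2)\subsetneq\cdots$ of idempotent-generated ideals) ACCP is excluded as well. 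Part (7) combines these: any form of atomicity forces $0$ to be a product of irreducibles, so part (4) presents $R$ as a finite product of indecomposables, ACCP forces the same finiteness through the idempotent chain, and the factors inherit the property by part (6).

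Part (6) is the transfer across a finite product and again rests on Theorem 2.1(7). In every case the forward direction lifts a prospective factorization: given $a_i\in R_i$ one factors $(1,\dots,a_i,\dots,1)$ in the product and projects to the $i$th coordinate, where irreducibles of the product become units or irreducibles of $R_i$; the converse assembles coordinatewise factorizations, absorbing unit coordinates into one genuine factor. The only delicate point---and the origin of the hypotheses in (6b) and (6c)---is factoring $0=(0,\dots,0)$, where the $i$th coordinate can be annihilated only if $0$ is a product of the relevant irreducibles of $R_i$. For ordinary, strong, and prime irreducibility $0$ is itself such an irreducible exactly when $R_i$ is a domain and Theorem 2.1(7) imposes no further condition, so (6a) needs no extra hypothesis. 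For $m$-irreducibility $0$ is $m$-irreducible precisely when $R_i$ is a field, and a non-field domain admits no product of nonzero $m$-irreducibles equal to $0$, which yields the field condition in (6b). For very strong irreducibility Theorem 2.1(7) forces the distinguished coordinate of each factor to be nonzero once $n>1$, so a domain coordinate of any sort can never be annihilated, forcing $n=1$ as in (6c).

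The crux of the whole theorem is the forward direction of (3): a $p$-atomic ring is a finite direct product of UFDs and SPIRs. The converse is routine---a UFD is $p$-atomic by the characterization in the introduction, a SPIR $S$ with maximal ideal $(t)$ and $t^{\,n}=0$ is $p$-atomic since every nonzero nonunit is $ut^{\,k}$ and $0=t^{\,n}$, and part (6a) then passes to the product. For the forward direction I would use $p$-atomic $\Rightarrow$ atomic to factor $0$ into primes and invoke part (4) (with part (6a)) to reduce to the case of an indecomposable $p$-atomic ring; a domain here is a UFD by the introduction. The hard part will be to show that an indecomposable $p$-atomic ring $R$ that is not a domain is a SPIR. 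The plan is to start from a prime zero divisor $p$ occurring in a factorization $0=p_1\cdots p_s$, note that every prime ideal of $R$ contains some $p_i$ so that the minimal primes are principal, use indecomposability to force a unique minimal prime equal to the nilradical, and then exploit $p$-atomicity to show this prime is the unique maximal ideal and is principal and nilpotent, i.e. that $R$ is a SPIR. Controlling the prime zero divisors---in particular showing that in the indecomposable case they are nilpotent and pairwise associate---is the technical heart, and is the step where I would either carry out the local analysis in detail or appeal to the classification already available in the cited literature.
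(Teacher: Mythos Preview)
Your approach differs substantially from the paper's. The paper proves this theorem almost entirely by citation to \cite{AV1}: parts (1), (4), (5), (6) are referred directly to results there, part (2) is Theorem~2.1(6), and part (7) is assembled from (1), (4), (6). You instead rebuild the content from Theorem~2.1, and for parts (2), (4), (5), (6), (7) your arguments are correct and more informative than a bare citation. The induction in (4) via Theorem~2.1(7), the coordinate-counting in (5), and the analysis of how $0$ must factor in (6b)--(6c) are precisely the mechanisms behind the cited results, so what you have written is essentially an unpacking of \cite{AV1} rather than a new route.

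The one place with a genuine gap is part (3). The paper's argument is short and structural: if $R$ is $p$-atomic then every proper principal ideal is a finite product of principal primes, so $R$ is a $\pi$-ring, and the classical structure theorem for $\pi$-rings decomposes $R$ as a finite direct product of SPIRs and $\pi$-domains, the latter being exactly the UFDs. Your direct attack reduces correctly to the indecomposable non-domain case, but the step ``use indecomposability to force a unique minimal prime equal to the nilradical'' does not stand on its own: indecomposable rings can have several minimal primes (e.g.\ $k[x,y]/(xy)$), so you must actually use $p$-atomicity to manufacture a nontrivial idempotent from two distinct principal minimal primes. That is real work---essentially the content of the $\pi$-ring decomposition you are trying to sidestep---and your sketch does not supply it. Since you already allow for citing the classification at this point, the honest move is to do what the paper does and invoke the $\pi$-ring structure theorem; your deferral of $p$-atomic $\Rightarrow$ strongly atomic to part (3) is then fine, though note it makes the logical order (1)--(3) slightly nonlinear.
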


\begin{proof}
\
\begin{enumerate}
\item \citep[Theorem 3.7]{AV1}.
\item Theorem 2.1 (6).
%2.8
\item If $R$ is $p$-atomic, every proper principal ideal of $R$ is a finite product of principal prime ideals. So $R$ is a $\pi$-ring, i.e., a commutative ring in which every proper principal ideal is a product of prime ideals. Hence $R$ is a finite direct product of special principal ideal rings (SPIRs) and $\pi$-domains. Thus $R$ is a finite direct product of SPIRs and $p$-atomic integral domains (=UFDs). The converse is clear. See \citep[Theorem 3.6]{AV1} and the paragraph preceding it.
\item \cite[Theorem 3.3] {AV1} and its proof.
\item \cite[Theorem 3.4]{AV1}.
\item \cite[Theorem 3.4]{AV1}.
\item This follows from (1), (4), and (6).
\end{enumerate}
\end{proof}

\

%%%%%%%%%%%
\section{Some Simple Results about Polynomial Rings}

%3.1
In this section we collect some simple useful results concerning polynomial rings. The following characterizations of units, zero divisors, idempotents, and nilpotents and of the Jacobson radical and nilradical of $R[X]$ are well known.

\begin{thm}
Let $R$ be a commutative ring and $X$ an indeterminate over $R$. Let $f=a_0+a_1X+\cdots+a_nX^n \in R[X]$. 
\begin{enumerate}
\item $f$ is a unit $\iff$ $a_0$ is a unit and $a_i$ is nilpotent for $i \geq 1$
\item $f$ is a zero divisor $\iff$ there exists $0 \neq c \in R$ with $cf=0$
\item $f$ is idempotent $\iff$ $a_0$ is idempotent and $a_i=0$ for $i \geq 1$
\item $f$ is nilpotent $\iff$ each $a_i$ is nilpotent
\item $J(R[X])=nil(R[X])=nil(R)[X]$
\end{enumerate}
\end{thm}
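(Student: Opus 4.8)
The plan is to prove each of the five characterizations in Theorem 3.1 for $f = a_0 + a_1 X + \cdots + a_n X^n \in R[X]$. These are classical facts (due essentially to Gilmer and others), and the natural strategy is to handle (4) first, then (1), and then derive (5) immediately from (1) and (4); statements (2) and (3) can be treated by direct degree/coefficient arguments.

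First I would prove (4), that $f$ is nilpotent $\iff$ each $a_i$ is nilpotent. The ``$\Leftarrow$'' direction is easy: if each $a_i X^i$ is nilpotent, then $f$ is a finite sum of nilpotents in the commutative ring $R[X]$, hence nilpotent. For ``$\Rightarrow$'' the cleanest route is to localize/reduce modulo primes: $f$ is nilpotent iff $f \in P[X]$ for every prime ideal $P$ of $R$ (since $\mathrm{nil}(R[X])$ is the intersection of all primes of $R[X]$, and primes of $R[X]$ contracting suitably control the coefficients), which forces every $a_i \in \mathrm{nil}(R)$. Alternatively one argues by induction on $n$: from $f^k = 0$ the top coefficient $a_n^k = 0$ shows $a_n$ is nilpotent, so $a_n X^n \in \mathrm{nil}(R[X])$ and $f - a_n X^n$ is again nilpotent with lower degree.

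Next I would use (4) to prove (1). For ``$\Leftarrow$'', if $a_0$ is a unit and each $a_i$ ($i\ge 1$) is nilpotent, then $f - a_0 = a_1 X + \cdots + a_n X^n$ is nilpotent by (4), so $f = a_0 + (\text{nilpotent})$ is a unit plus a nilpotent, hence a unit. For ``$\Rightarrow$'', suppose $fg = 1$ with $g = b_0 + \cdots + b_m X^m$; comparing constant terms gives $a_0 b_0 = 1$, so $a_0$ is a unit, and the standard descending relation $a_n^{r+1} b_{m-r} = 0$ (obtained by multiplying out the top-degree coefficients) yields $a_n^{m+1} = 0$, so $a_n$ is nilpotent; stripping off $a_n X^n$ and inducting gives that every higher $a_i$ is nilpotent. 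Then (5) is immediate: $\mathrm{nil}(R[X]) = \mathrm{nil}(R)[X]$ is just a restatement of (4), and $J(R[X]) = \mathrm{nil}(R[X])$ follows because $f \in J(R[X])$ iff $1 + Xf$ is a unit (using that $R[X]$ has the variable $X$ available), and by (1) this forces the coefficients of $f$ to be nilpotent.

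For (2), the content is the nontrivial direction that a zero divisor $f$ is annihilated by a single nonzero element $c \in R$; this is McCoy's theorem, proved by taking a nonzero $g$ of minimal degree with $fg = 0$ and showing each $a_i g = 0$, whence the leading coefficient of $g$ (an element of $R$) annihilates $f$. Statement (3) is the easiest: if $f^2 = f$, comparing the top coefficient gives $a_n^2 = 0$ while also $a_n$ must be idempotent, forcing $a_n = 0$ for $n \ge 1$ by a short induction, and then $a_0^2 = a_0$. \textbf{The main obstacle} I anticipate is the ``$\Rightarrow$'' direction of (1) and the minimal-degree annihilator argument in (2) (McCoy's theorem), since both require the careful bookkeeping of the descending/ascending coefficient relations rather than a one-line reduction; everything else follows quickly once (4) is in hand. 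Since the excerpt labels these as ``well known,'' I expect the author's proof to cite standard references or give only brief indications rather than the full induction.
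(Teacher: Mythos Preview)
Your anticipation was exactly right: the paper gives no proof of this theorem at all. It is simply introduced with the sentence ``The following characterizations of units, zero divisors, idempotents, and nilpotents and of the Jacobson radical and nilradical of $R[X]$ are well known,'' and the exposition moves on immediately afterward. So there is nothing to compare your argument against.

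Your sketch itself is the standard textbook treatment and is essentially correct. One small slip: in (3) you write that ``comparing the top coefficient gives $a_n^2 = 0$ while also $a_n$ must be idempotent.'' The first claim is fine (for $n\ge 1$ the $X^{2n}$-coefficient of $f^2=f$ forces $a_n^2=0$), but there is no reason $a_n$ itself is idempotent, so the ``nilpotent and idempotent $\Rightarrow 0$'' step does not apply as stated. The clean fix is to first observe, via reduction modulo each prime $P$ of $R$, that $a_i\in\mathrm{nil}(R)$ for $i\ge 1$; then $g:=f-a_0$ is nilpotent, $a_0$ is idempotent, and $f^2=f$ rewrites as $g(1-2a_0-g)=0$ with $1-2a_0-g$ a unit (since $(1-2a_0)^2=1$ and $g$ is nilpotent), whence $g=0$. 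Everything else in your outline---the induction on the leading coefficient for (1) and (4), McCoy's minimal-degree annihilator for (2), and the $1+Xf$ trick for $J(R[X])$ in (5)---is correct and is exactly what one finds in the usual references.
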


Since $Id(R)=Id(R[X])$, $R$ is indecomposable if and only if $R[X]$ is indecomposable. Any finite direct product decomposition of $R[X]$ arises from a direct product decomposition of $R$. For a finite direct product decomposition $R=R_1 \times \cdots \times R_n$, we may naturally identify $R[X]$ with $R_1[X] \times \cdots \times R_n[X]$ via the map $(a_i^{(0)})+(a_i^{(1)})X+\cdots+(a_i^{(n)})X^n \rightarrow (a_i^{(0)}+a_i^{(1)}X+\cdots+a_i^{(n)}X^n).$

%3.2
For an ideal $I$ of $R$ we may also identify $R[X]/I[X]$ with $(R/I)[X]$ via the map $a_0+a_1X+\cdots+a_nX^n+I[X] \mapsto (a_0+I)+(a_1+I)X+\cdots+(a_n+I)X^n$. Now for $a \in R$, we may consider $a$ as an element of $R$ or of $R[X]$, and for example for $a,b \in R$ we may consider $a \cong b$ as elements of $R$ or of $R[X]$. With the obvious notation we write $a \cong_R b$ or $a \cong_{R[X]}b$. The next result collects some simple results.\\

\begin{thm}
Let $R$ be a commutative ring and $X$ an indeterminate over $R$. Let $a,b \in R$ and $f,g \in R[X]$.
\begin{enumerate}
\item $a \sim_R b \iff a \sim_{R[X]} b$
\item $a \approx_R b \iff a \approx_{R[X]} b$
\item $a \cong_{R[X]} b \iff a \cong_R b$ and $a=0$ or $ann(b) \subseteq nil(R)$
\item $f \sim_{R[X]}g \iff f \approx_ {r R[X]} g$
\item $a\sim_R b \iff a \approx_{r R[X]} b$
\item $R[X]$ is pr\`esimplifiable $\iff$ $R$ is pr\`esimplifiable and $0$ is primary.
\item $R[X]$ is weakly pr\`esimplifiable $\iff$ $R$ is weakly pr\`esimplifiable. Hence if $R$ is pr\`esimplifable, $R[X]$ is weakly pr\`esimplifable.
\item $a$ is irreducible as an element of $R$ if and only if $a$ is irreducible as an element of $R[X]$.
\end{enumerate}
\end{thm}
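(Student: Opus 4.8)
The plan is to treat (1)--(8) as a family of coefficient-comparison arguments resting on Theorem 3.1, namely that a unit of $R[X]$ has a unit constant term and nilpotent higher coefficients, that $F$ is a zero divisor iff some nonzero constant $c$ kills it (McCoy), and that $J(R[X])=nil(R[X])=nil(R)[X]$. For (1) and (2), one direction is free since a unit (resp. associate relation) of $R$ survives in $R[X]$; conversely, if $a=Fb$ with $a,b\in R$, comparing coefficients forces $f_ib=0$ for $i\ge 1$ and $f_0b=a$, so $a=f_0b$ with $f_0\in R$, recovering $b\mid a$ in $R$, and for (2) the hypothesis that $F$ is a unit makes $f_0$ a unit. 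For (3) the extra condition appears because $a\cong_{R[X]}b$ must reject every cofactor of the form $r+cX$ with $c\in ann(b)$: since $(r+cX)b=rb$, such a cofactor can realize $a=(r+cX)b$, so very-strong-associateness in $R[X]$ forces $r+cX$ to be a unit, hence $c$ nilpotent; conversely, once $ann(b)\subseteq nil(R)$ these are the only new cofactors, so $a\cong_R b$ lifts. The case $a=0$ is handled separately since then $b=0$.

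For (4) the implication $f\approx_{rR[X]}g\Rightarrow f\sim_{R[X]}g$ is immediate, and the content is the reverse. Writing $f=Pg$, $g=Qf$ gives $f=PQf$, so $f(1-PQ)=0$, and then $PQg=PQ^2f=Q(PQf)=Qf=g$, whence $(1-PQ)g=0$ as well. The key move is to replace $P$ by the padded multiplier $P'=P+(1-PQ)X^{N}$ with $N>\deg P$: one checks $P'g=Pg+(1-PQ)X^{N}g=f$, and $P'$ is \emph{regular}, because a constant $c$ annihilating every coefficient of $P'$ annihilates both $P$ and $1-PQ$ (their coefficient sets are disjoint once $N>\deg P$), giving $c=cPQ=0$. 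A symmetric padded $Q'=Q+(1-PQ)X^{M}$ is regular and satisfies $Q'f=g$, so $f\approx_{rR[X]}g$. Statement (5) is then just (1) followed by (4) applied to the constants $a,b$.

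For (6) and (7) the same McCoy template works. In (6), the forward direction exploits the test equation $c=c(1+bX)$, valid whenever $cb=0$: présimplifiability of $R[X]$ forces $1+bX$ to be a unit, i.e. $b\in nil(R)$, which simultaneously yields that $R$ is présimplifiable and that $0$ is primary ($Z(R)\subseteq nil(R)$). Conversely, from $F=FG$ with $F\ne 0$ one has $F(1-G)=0$, so McCoy gives $0\ne d$ with $d=dG$; the constant term $d=dg_0$ and présimplifiability of $R$ make $g_0$ a unit, while $dg_i=0$ for $i\ge 1$ together with $0$ primary makes each $g_i$ nilpotent, so $G$ is a unit. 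Statement (7) is identical with ``unit'' weakened to ``regular,'' so $0$ primary is no longer needed, and the final sentence follows because présimplifiable $\Rightarrow$ weakly présimplifiable.

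For (8) the reverse direction is immediate: if $a=bc$ in $R$ then irreducibility in $R[X]$ plus (1) gives $a\sim_R b$ or $a\sim_R c$. The forward direction is where the real work lies, and I would split it using Theorem 2.1(2)(c). In the regular case, $a$ regular and irreducible is very strongly irreducible (Theorem 2.1(1),(4)), so given $a=FG$ I would reduce modulo each minimal prime $\mathfrak p$; since every element of a minimal prime is a zero divisor, the regular $a$ stays nonzero mod $\mathfrak p$, and the degree count in the domain $(R/\mathfrak p)[X]$ forces the tail of one factor into $\mathfrak p$, whence (as $nil(R)=\bigcap\mathfrak p$) that tail is nilpotent and the factor is a unit. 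In the zero-divisor case one reduces, via Theorem 2.1(2)(c), to showing $R[X]a$ is maximal among principal ideals contained in $Z(R[X])$; McCoy supplies a single constant annihilating a competing $F$, and comparing constant terms with maximality in $R$ yields $f_0\sim_R a$. I expect the genuine obstacle of the whole theorem to be \emph{promoting} $f_0\sim_R a$ to $a\mid F$ (equivalently $a\sim_{R[X]}F$): the evaluation identities only give $F(r)\sim_R a$ for every $r\in R$, and since a polynomial may vanish at all points of $R/Ra$ without being zero, this does not by itself force the higher coefficients of $F$ into $Ra$. Closing this gap is the step I would spend the most care on, presumably by localizing at an annihilator prime $P\ni a$ (so that $P[X]\subseteq Z(R[X])$) and arguing $m$-irreducibility over the local ring $R_P$.
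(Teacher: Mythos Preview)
Your treatment of (1)--(7) is correct and in several places more explicit than the paper's own proof, which disposes of (1) and (2) as ``Clear,'' proves only (3) in detail, and for (4), (6), (7), (8) simply cites \cite{AC2}, \cite{B1}, and \cite{AV1}. Your padded-multiplier argument for (4)---replacing $P$ by $P'=P+(1-PQ)X^N$ with $N>\deg P$ so that a constant annihilator of $P'$ must kill both $P$ and $1-PQ$ separately---is exactly the device behind \cite[Theorem~18(3)]{AC2}, and your McCoy-based arguments for (6) and (7) are the intended ones. One small remark on (7): you do not need anything about the higher coefficients $g_i$; once $g_0$ is regular, any constant annihilator of $G$ would kill $g_0$, so McCoy alone gives $G$ regular.

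Part (8) is where the proposal has problems. In the regular case the outline is right but two steps are compressed past correctness. First, in $(R/\mathfrak p)[X]$ the identity $\bar a=\bar F\,\bar G$ with $\bar a$ a nonzero constant forces \emph{both} $\bar F$ and $\bar G$ to have degree zero, so the tails of both $F$ and $G$ lie in every minimal prime and hence in $nil(R)$; your phrase ``the tail of one factor'' is dangerous, because if the choice varied with $\mathfrak p$ the intersection step would collapse. Second, a nilpotent tail alone does not make a polynomial a unit of $R[X]$: you must still read off $a=f_0g_0$ from the constant term and use very strong irreducibility of $a$ in $R$ (Theorem~2.1(1),(4), which you cited) to conclude that one of $f_0,g_0$ is a unit of $R$, after which that factor becomes a unit of $R[X]$. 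With these two fixes the regular case is complete.

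The zero-divisor case is, as you yourself flag, genuinely incomplete: you reduce to promoting $f_0\sim_R a$ to $a\mid F$ in $R[X]$ and do not close this. Your suggested repair via localization at an annihilator prime is neither carried out nor obviously adequate---annihilator primes need not exist in an arbitrary commutative ring, and even when $P[X]\subseteq Z(R[X])$ you face the same coefficient-by-coefficient obstruction inside the localization. The paper does not argue this case either; it defers entirely to \cite[Theorem~6.2]{AV1}, so to complete a self-contained proof you would have to go to that source rather than the present paper.
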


%3.3
\begin{proof}
\
\begin{enumerate}
\item Clear.
\item Clear. 
\item We may assume $a \neq 0$.
$(\Longleftarrow)$ Suppose $a=fb$ for $f=a_0+a_1X+\cdots+a_nX^n \in R[X]$. Then $a=a_0b$, so $a\cong_R b$ gives $a_0 \in U(R)$. For $i \geq 1$, $a_ib=0$, so $a_i \in ann(b) \subseteq nil(R)$. Hence $f \in U(R[X])$. $(\Longrightarrow)$ Certainly $a \cong_{R[X]} b \implies a \cong_R b$. So $a=rb$ for some $r \in R$. Suppose $c \in ann(b)$. Then $a=(r+cX)b$. So $a \cong_{R[X]} b$ gives $r+cX \in U(R[X])$ and hence $c \in nil(R)$. 
\item This follows from the proof of \citep[Theorem 18 (3)]{AC2}.
\item Combine (1) and (4). (6) \cite{B1} (7) \citep[Theorem 18 (2)]{AC2}.
 \addtocounter{enumi}{2}
\item  \citep[Theorem 6.2]{AV1}.
\end{enumerate}
\end{proof}

If $R$ is an integral domain and $a \in R$ and $f \in R[X]$ with $a \sim f$, then $f \in R$. Suppose that $R$ has a nonzero nilpotent $a$. Then $1+aX \in U(R[X])$, so $1 \sim 1+aX$, in fact, $1 \cong 1+aX$. Thus if $R[X]$ satisfies $a \sim f \implies f \in R$ for $a \in R$ and $f \in R[X]$, $R$ must be reduced. The converse is also true.

%%%%% prop 3.3
\begin{prop} 
For a commutative ring $R$ and indeterminate $X$ over $R$, the following are equivalent. 
\begin{enumerate}
\item $R$ is reduced.
\item For $a \in R$ and $f \in R[X]$, $a \sim f \implies f \in R$.
\item For $a \in R$ and $f \in R[X]$, $a \approx f \implies f \in R$.
\item For $a \in R$ and $f \in R[X]$, $a \cong f \implies f \in R$.
\end{enumerate}
\end{prop}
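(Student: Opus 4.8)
The plan is to prove the cycle $(1)\Rightarrow(2)\Rightarrow(3)\Rightarrow(4)\Rightarrow(1)$. The implications $(2)\Rightarrow(3)\Rightarrow(4)$ are immediate from the hierarchy of associate relations recorded in Section 2: since $a\cong f \Rightarrow a\approx f \Rightarrow a\sim f$, a hypothesis forcing $f\in R$ under the \emph{weakest} relation $a\sim f$ automatically forces it under the stronger relations $a\approx f$ and $a\cong f$. The implication $(4)\Rightarrow(1)$ is exactly the contrapositive already worked out in the paragraph preceding the proposition: if $R$ has a nonzero nilpotent $t$, then $1+tX\in U(R[X])$ by Theorem 3.1(1), so $1\cong_{R[X]}1+tX$ with $1\in R$ but $1+tX\notin R$, violating (4).

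The real content is $(1)\Rightarrow(2)$, and this is where I expect the work to lie. Assume $R$ is reduced and suppose $a\in R$, $f\in R[X]$ satisfy $a\sim_{R[X]}f$, say $f=ga$ and $a=hf$ for some $g,h\in R[X]$. If $a=0$ then $f=ga=0\in R$, so I may assume $a\neq 0$ and write $f=b_0+b_1X+\cdots+b_nX^n$; the goal is to show $b_i=0$ for every $i\ge 1$.

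My approach is to pass to the minimal primes of $R$. For a minimal prime $P$ of $R$, reduction modulo $P[X]$ is a ring homomorphism onto $(R/P)[X]$ (using the identification $R[X]/P[X]=(R/P)[X]$ noted in Section 3), and the divisibility relations $f=ga$ and $a=hf$ descend to $\overline{f}=\overline{g}\,\overline{a}$ and $\overline{a}=\overline{h}\,\overline{f}$ in the \emph{domain} $(R/P)[X]$; hence $\overline{a}\sim\overline{f}$ there. In a polynomial ring over a domain the associate of a constant is again a constant: if $\overline{a}=0$ then $\overline{f}=0$, while if $\overline{a}\neq 0$ then $\overline{f}=\overline{u}\,\overline{a}$ for a unit $\overline{u}\in R/P$, since the units of $(R/P)[X]$ are the (constant) units of $R/P$. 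In either case $\overline{f}\in R/P$, so $b_i\in P$ for all $i\ge 1$. As this holds for every minimal prime $P$ and $R$ is reduced, $b_i\in\bigcap_P P=nil(R)=0$, giving $f=b_0\in R$.

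The only delicate point is this passage to $(R/P)[X]$: one must verify that $\sim$ is preserved under the quotient map and that a constant can only be associate to a constant, which rests on the fact that the units of a polynomial ring over a domain are constant. Everything else is bookkeeping. An alternative to the minimal-prime reduction would be to argue directly from $a=(hg)a$, i.e.\ $a(1-hg)=0$, by comparing coefficients and exploiting that $R[X]$ is reduced; but reducing modulo the minimal primes is cleaner and sidesteps a tangle of annihilator computations.
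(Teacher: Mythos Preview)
Your proof is correct and follows essentially the same route as the paper: reduce modulo a prime ideal $P$, use that in the domain $(R/P)[X]$ an associate of a constant is constant, and intersect over primes to land in $nil(R)=0$; the implications $(2)\Rightarrow(3)\Rightarrow(4)$ and $(4)\Rightarrow(1)$ are handled identically. The only cosmetic difference is that you restrict to \emph{minimal} primes while the paper runs over all of $\mathrm{Spec}(R)$, which of course makes no difference since both intersections equal $nil(R)$.
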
 

\begin{proof}
$(1) \Longrightarrow (2)$ Suppose that $R$ is reduced and $a \sim f=a_0+a_1X+\cdots+a_nX^n \in R[X]$. Let $P$ be a prime ideal of $R$. Then in $\bar{R}=R/P$, $\bar{a}\sim \bar{f}=\bar{a_0}+\bar{a_1}X+\cdots+\bar{a_n}X^n$. Since $\bar{R}$ is an integral domain, $\bar{a_1}=\cdots=\bar{a_n}=\bar{0}$. So for $i \geq 1$, $a_i \in \bigcap \{P \mid P \in Spec(R) \}=nil(R)=0$. So $f \in R$. 
$(2) \implies (3) \implies (4)$ Clear. $(4) \implies (1)$ This was shown in the preceding paragraph. 

\end{proof}

Let $a \in R$. We have seen that $a$ is irreducible as an element of $R$ if and only if $a$ is irreducible as an element of $R[X]$ and certainly the same holds for ``prime". However, the next example (essentially \citep[Example 6.1]{AV1}) shows that this does not hold for the other forms of irreducibility. Indeed, $a$ can be very strongly irreducible (and prime) as an element of $R$ but not even strongly irreducible as an element of $R[X]$. But it is easily checked that if $a \in R$ is strongly irreducible, very strongly irreducible, $m$-irreducible, or weakly prime as an element of $R[X]$, then it has the corresponding property as an element of $R$. We do not know whether a weakly prime element of $R$ is weakly prime as an element of $R[X]$. 

\begin{ex}
Let $R=\mathbb{Z}_{(2)}(+)\mathbb{Z}_4$ (idealization). 
So $R$ is a one-dimensional local ring and hence is pr\`esimplifiable and very strongly atomic. Let $a=(0,\bar{1}) \in R$, so $a$ is very strongly irreducible and prime as an element of $R$. So as an element of $R[X]$, $a$ is irreducible and prime. However, $a$ is not strongly irreducible as an element of $R[X]$, in fact $a$ is not even a product of strongly irreducible elements of $R[X]$. Hence a strongly irreducible (resp., $m$-irreducible, very strongly irreducible) of $R$ need not be strongly irreducible (resp., $m$-irreducible, very strongly irreducible) as an element of $R[X]$. For let $f=(1,\bar{0})+(2,\bar{0})X \in R[X]$; so $a=af^2=(af)f$. However, it is easily checked that $a \not\approx af$ and $a \not\approx f$; so $a$ is not strongly irreducible as an element of $R[X]$. Moreover, $a$ is not even a product of strongly irreducible elements of $R[X]$. For if $a=f_1 \cdots f_n$ where each $f_i \in R[X]$ is strongly irreducible, then $a$ irreducible in $R[X]$ gives that $a \sim f_i$ for some $i$. But then $f_i$ strongly irreducible gives that $a$ is strongly irreducible, a contradiction. Thus $R$ is very strongly atomic, but $R[X]$ is not even strongly atomic. However, since $R[X]$ is Noetherian, it is atomic. Note that $a \sim af$, but $a \not\approx af$ and $a \cong_R a$, but $a \not\cong_{R[X]} a$. Also, $R$ is pr\`esimplifiable, but $R[X]$ is not. 
\end{ex}

Now in general an element can be $m$-irreducible, but not very strongly irreducible (e.g., $a=(\bar{0},\bar{1})$ in $R=\mathbb{Z}_2 \times \mathbb{Z}_2$).  Note that the element $a=(\bar{0},\bar{1}) \in \mathbb{Z}_2 \times \mathbb{Z}_2$ just defined is strongly irreducible as an element of $R[X]$ but is not $m$-irreducible. (In fact, it is not even a product of $m$-irreducible elements of $R[X]$, here $R[X]$ is strongly atomic but not $m$-atomic.) However, we next show that for a nonzero element in a polynomial ring, the notions of $m$-irreducible and very strongly irreducible coincide. We have already given an example of an irreducible element of $R[X]$, $R=\mathbb{Z}_{(2)}(+)\mathbb{Z}_4$, that is not strongly irreducible.

\begin{thm} %thm 3.5
Let $R$ be a commutative ring and $X$ an indeterminate over $R$. Let $0 \neq f \in R[X]$.
\begin{enumerate}
\item $f$ is $m$-irreducible if and only if $f$ is very strongly irreducible.
\item $R[X]$ is very strongly atomic if and only if $R[X]$ is $m$-atomic.
\item Suppose that $R$ is reduced. If $f$ is very strongly irreducible, $f$ is regular.
\item Suppose that $R[X]$ is reduced and very strongly atomic. Then $R$ is an integral domain and hence $R$ is (very strongly) atomic.
\end{enumerate}
\end{thm}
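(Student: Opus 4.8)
The plan is to prove the four parts in order, since (2) is a direct corollary of (1) and (4) feeds on (3). Throughout I would rely on the structural facts already in hand: the characterizations in Theorem 2.1, the atomicity implications of Theorem 2.2(1), and the polynomial-ring identifications $Id(R[X])=Id(R)$, $J(R[X])=nil(R[X])=nil(R)[X]$ (Theorem 3.1(5)), and $R[X]/I[X]\cong (R/I)[X]$.

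For (1), the implication diagram already yields that very strongly irreducible $\Rightarrow$ $m$-irreducible, so only the reverse implication needs argument. I would apply Theorem 2.1(6) to the ring $R[X]$: for $0\neq f$, being $m$-irreducible means either $f$ is very strongly irreducible (the desired conclusion) or $R[X]f$ is an idempotent maximal ideal. The entire content is to exclude the second alternative. Assuming $R[X]f$ is maximal and idempotent, $R[X]f=(R[X]f)^2=R[X]f^2$ gives $f=hf^2$ for some $h\in R[X]$; then $e:=hf$ satisfies $e^2=h^2f^2=h(hf^2)=hf=e$, so $e$ is idempotent, and $f=hf^2=ef$ shows $R[X]f=R[X]e$. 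Since $Id(R[X])=Id(R)$, we have $e\in R$, and $e$ is a nontrivial idempotent: $e\neq 0$ because $f\neq 0$, and $e\neq 1$ because the ideal is proper. Consequently $R[X]/R[X]e\cong (R/Re)[X]\cong (R(1-e))[X]$, a polynomial ring over the nonzero ring $R(1-e)$, which can never be a field since $X$ is a nonzero nonunit there. This contradicts maximality of $R[X]f$, so alternative (ii) is impossible. I expect this to be the main obstacle: the key insight is recognizing that a principal idempotent ideal is generated by an idempotent, that this idempotent lives in $R$, and that the quotient is again a genuine polynomial ring.

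Part (2) then follows from (1). The implication very strongly atomic $\Rightarrow$ $m$-atomic is the general fact in Theorem 2.2(1). Conversely, in any factorization of a nonzero nonunit of $R[X]$ (or of $0$, which must factor when $R[X]$ is not a domain, as noted after the definition of atomicity) into $m$-irreducible elements, each factor is nonzero, for a zero $m$-irreducible factor would force $R[X]$ to be a field and a zero factor cannot occur in a factorization of a nonzero element. Hence by (1) every such factor is very strongly irreducible, so $R[X]$ is very strongly atomic. Part (3) is essentially immediate from Theorem 2.1(5) applied to $R[X]$: a nonzero very strongly irreducible $f$ satisfies $ann(f)\subseteq J(R[X])$, and since $R$ is reduced, $J(R[X])=nil(R)[X]=0$ by Theorem 3.1(5); therefore $ann(f)=0$ and $f$ is regular.

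Finally, for (4) I would first observe that $R[X]$ reduced forces $R$ reduced. Then I would argue by contradiction: if $R$ were not a domain, neither would $R[X]$ be, so $0$ would be a finite product of very strongly irreducible elements, each necessarily nonzero (a zero very strongly irreducible factor would make $R[X]$ a domain). By (3) each such factor is regular, whence their product is regular; but the product equals $0$, which is not regular, a contradiction. Thus $R$ is a domain, $R[X]$ is a domain, and the very strongly atomic hypothesis reduces to ordinary atomicity. A standard degree count (degrees are additive in a domain) shows that every nonzero nonunit of $R$ factors into degree-zero atoms of $R[X]$, and these are irreducible in $R$ by Theorem 3.2(8); since in a domain all forms of irreducibility coincide for nonzero elements (Theorem 2.1(1)), this factorization is into very strongly irreducible elements, so $R$ is very strongly atomic.
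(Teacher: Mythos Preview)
Your proof is correct and follows essentially the same route as the paper's. The only cosmetic differences are in how the contradictions are phrased: in (1) the paper exhibits the proper chain $R[X]e \subsetneq (X,e) \subsetneq R[X]$ rather than passing to the quotient $(R/Re)[X]$, and in (4) the paper argues directly that every nonzero nonunit of $R[X]$ is regular (being a product of regulars) rather than deriving a contradiction from a factorization of $0$; these are equivalent formulations of the same idea.
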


\begin{proof}
\
\begin{enumerate}
\item A nonzero very strongly irreducible element is always $m$-irreducible. Conversely, suppose that $0 \neq f \in R[X]$ is $m$-irreducible. By Theorem 2.1 (6) either $f$ is very strongly irreducible or $R[X]f$ is an idempotent maximal ideal of $R[X]$. So in the second case we have $f=f^2g$ for some $g \in R[X]$. Then $e=fg$ is idempotent and $R[X]f=R[X]e$. Now $e \in R$, so $R[X]f=R[X]e \subsetneq (X,e) \subsetneq R[X]$, a contradiction. 
\item Clearly follows from (1).
\item Suppose that $R$ is reduced and $0 \neq f \in R[X]$ is very strongly atomic. By Theorem 2.1 (5), $ann(f) \subseteq J(R[X])=(nil(R))[X]=0$. So $f$ is regular. 
\item By (3) every very strongly irreducible element of $R[X]$ is regular. Since every nonzero nonunit of $R[X]$ is a product of very strongly irreducible elements, each nonzero element of $R[X]$ is regular, that is, $R$ is an integral domain. But it is easily checked that for $R$ an integral domain $R[X]$ very strongly atomic (=atomic) implies that $R$ is very strongly atomic (=atomic).
\end{enumerate} 
\end{proof}

It is well known that $R$ atomic does not imply $R[X]$ is atomic, even for $R$ an integral domain \cite{R}. However, it is easily seen that if $R$ is an integral domain and $R[X]$ is atomic, then $R$ itself is atomic. Coykendall and Trentham \cite{CT} give an example of a zero-dimensional quasilocal ring $S$ having no atoms with $S[X]$ being very strongly atomic. (Note that in \cite{CT} the term ``strongly atomic" is used for what we have called very strongly atomic. However, in this case since $S[X]$ is pr\`esimplifiable the various forms of atomicity all coincide.) They also show that if $R$ is a reduced ring with $R[X]$ very strongly atomic, then $R$ is very strongly atomic. However, by Theorem 3.4 (4) such a ring $R$ is actually an integral domain, and hence trivially $R[X]$ atomic implies $R$ is atomic. If $R$ is an integral domain, $R[X]$ satisfies ACCP if and only if $R$ does. While for any commutative ring $R$, $R[X]$ satisfies ACCP implies $R$ satisfies ACCP, the converse is false \cite{HL}.

For an integral domain $R$ a polynomial $f \in R[X]$ is \emph{indecomposable} if it is not a product of two polynomials of positive degree. This is equivalent to $f=gh$, $g,h \in R[X]$, implies $g \in R$ or $h \in R$ which may be restated as $f=gh$ implies $g\approx_{R[X]}a$ or $h\approx_{R[X]}a$ for some $a \in R$. Certainly an irreducible polynomial is indecomposable, but $2X+4 \in \mathbb{Z}[X]$ is indecomposable but not irreducible. Any polynomial of degree at most one is indecomposable, and any polynomial is a product of indecomposable polynomials. (With this definition the zero polynomial is indecomposable.)

We would like to extend the definition of an indecomposable polynomial to commutative rings with zero divisors. In general, the condition that $f$ must not be a product of two polynomials of positive degree is very strong. For if $R$ has nonzero nilpotent elements, $R[X]$ has units of positive degree and hence if $a \in R$ is a factor of $f$, so is $au$ for any unit $u \in R[X]$. With this in mind we define $f \in R[X]$ to be \emph{indecomposable} if $f=gh$ for $g,h \in R[X]$ implies $g \approx_{R[X]} a$ or $h \approx_{R[X]} a$ for some $a \in R$. We next collect some facts about indecomposable polynomials.

\begin{thm} Let $R$ be a commutative ring and $X$ an indeterminate. Let $f \in R[X]$. 
\begin{enumerate}
\item $f$ very strongly irreducible $\implies$ $f$ is indecomposable.
\item For $R$ reduced, $f$ is indecomposable $\iff$ $f$ is not a product of two polynomials of positive degree.
\item $0$ is indecomposable $\iff$ $0$ is irreducible, or equivalently, prime.\\
\end{enumerate}
\end{thm}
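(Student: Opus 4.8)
The plan is to handle the three parts in order, reducing each to the structural facts about units and very strongly irreducible elements already recorded. For part (1), I would simply invoke Theorem 2.1(4). Since a very strongly irreducible element is by definition a nonunit (possibly zero), there are two cases. If $f \neq 0$, then Theorem 2.1(4), applied to the commutative ring $R[X]$, says that any factorization $f = gh$ forces $g$ or $h$ to be a unit of $R[X]$; a unit $u$ satisfies $u \approx_{R[X]} 1$ with $1 \in R$, so $f$ is indecomposable. If $f = 0$, then ``$0$ very strongly irreducible'' is equivalent to $R[X]$ being a domain (one of the Section 2 equivalences applied to $R[X]$), so $0 = gh$ forces $g = 0$ or $h = 0$, and $0 \approx_{R[X]} 0$ with $0 \in R$ again yields indecomposability.

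For part (2) the key observation is that when $R$ is reduced we have $nil(R) = 0$, so by Theorem 3.1(1) the units of $R[X]$ are exactly the units of $R$. Consequently, for $g \in R[X]$ and $a \in R$, the relation $g \approx_{R[X]} a$ (that is, $g = ua$ for a unit $u$ of $R[X]$) holds for some $a \in R$ precisely when $g \in R$, i.e.\ when $\deg g \leq 0$. With this translation in hand, the defining condition of indecomposability, namely that $f = gh$ implies $g \in R$ or $h \in R$, is logically identical (by taking contrapositives factorization-by-factorization, using $g \in R \iff \deg g \leq 0$) to the assertion that $f$ has no factorization into two factors both of positive degree. I would emphasize that reducedness is exactly what is needed here: in the non-reduced case a constant times a positive-degree unit can be a strong associate of a constant while itself having positive degree, which is precisely what breaks the degree interpretation.

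Part (3) is where the genuine work lies. The equivalence of ``$0$ irreducible'' and ``$0$ prime'' is immediate from the Section 2 equivalences applied to $R[X]$, since both amount to $R[X]$ (equivalently $R$) being a domain; so it remains to show $0$ is indecomposable if and only if $R$ is a domain. If $R$ is a domain then so is $R[X]$, and $0 = gh$ forces $g = 0$ or $h = 0$ with $0 \in R$, giving indecomposability. For the converse I would argue contrapositively: assuming $R$ is not a domain, choose nonzero $b, c \in R$ with $bc = 0$ and exhibit the factorization $0 = (bX)(cX)$. The crux of the whole theorem is verifying that neither factor is a strong associate of a constant: if $bX = ua$ with $u = u_0 + u_1X + \cdots$ a unit of $R[X]$ and $a \in R$, comparing constant terms gives $u_0 a = 0$, and since $u_0$ is a unit this forces $a = 0$, whence $bX = 0$, contradicting $b \neq 0$; the same applies to $cX$. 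Thus $0 = (bX)(cX)$ witnesses that $0$ is not indecomposable. I expect this constant-term comparison, showing $bX \not\approx_{R[X]} a$ for every $a \in R$, to be the step demanding the most care, as it is exactly the point where the strong-associate relation meets the description of units of $R[X]$.
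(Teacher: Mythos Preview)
Your proof is correct, and parts (1) and (2) are identical in substance to the paper's argument. In part (3) both you and the paper pursue the same overall strategy---exhibit a factorization of $0$ into two polynomials, neither of which is strongly associated to any element of $R$---but with different witnesses: you use $0 = (bX)(cX)$, whereas the paper uses $0 = (b + bX)(c + cX)$. Your choice is slightly cleaner: since $bX$ has zero constant term, the equation $bX = ua$ with $u$ a unit immediately forces $a = 0$ via invertibility of $u(0)$, and hence $bX = 0$. The paper's polynomial $b + bX$ has equal constant and linear coefficients, so an extra step is needed---comparing both coefficients yields $b = b\,u_0^{-1}u_1$ with $u_0^{-1}u_1$ nilpotent, whence $1 - u_0^{-1}u_1$ is a unit and $b = 0$. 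Both arguments ultimately rest on the same description of units of $R[X]$ (Theorem~3.1(1)); yours simply extracts the contradiction one step sooner.
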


\begin{proof}
\
\begin{enumerate}
\item Suppose that $f$ is very strongly irreducible. If $f=0$, this follows from (3), so suppose $f\neq 0$. Now $f=gh$ for $g,h \in R[X]$ implies $f$ or $g$ is a unit in $R[X]$ and hence is very strongly associated with $1$ in $R[X]$.
\item Suppose that $R$ is reduced. Then $U(R[X])=U(R) \subset R$. Thus $f=gh \implies g \approx_{R[X]} a$ or $h \approx_{R[X]} a$ for some $a \in R$ is equivalent to $g \in R$ or $h \in R$. 
\item $(\Longrightarrow)$ Now if $0$ is irreducible, equivalently prime, this gives $0=gh \implies g=0$ or $h=0$, so $g \approx_{R[X]} 0$ or $h \approx_{R[X]} 0$, so $0$ is indecomposable.\\
$(\Longleftarrow)$ Suppose that $0$ is not irreducible, so $0=ab$ where $a,b \in R \backslash \{0\}$. Then $0=(a+aX)(b+bX)$. Suppose that say $a+aX \approx_{R[X]} c$ where $c \in R$. Then $a+aX=cu$ where $u \in U(R[X])$, so we may take $u=u_0+u_1X$ where $u_0 \in U(R)$ and $u_1 \in nil(R)$. Then $a=cu_0$ and $a=cu_1$, so $a=au_{0}^{-1}u_1$. Hence $0=a(1-u_0^{-1}u_1)$ where $u_0^{-1}u_1 \in nil(R)$ and hence $1-u_0^{-1}u_1 \in U(R)$. But then $a=0$, a contradiction.
\end{enumerate}
\end{proof}

%3.10

We next give some examples of ``bad behavior'' of indecomposable elements.

\begin{ex}
\

\begin{enumerate}
\item (R reduced but $X$ not indecomposable)
Let $R=\mathbb{Z}_6$ and $X$ an indeterminate over $R$. Then $X=(\bar{2}+\bar{3}X)(\bar{3}+\bar{2}X)$, so $X$ is not indecomposable. In Theorem 4.1 we will see that $X$ is indecomposable $\iff$ $X$ is irreducible $\iff$ $R$ is indecomposable.
\item (R reduced with a nonzero element of $R$ not indecomposable)
Let $S$ and $T$ be indeterminates over $\mathbb{Z}$ and let $R=\mathbb{Z}[S,T]/(3S,ST,2T)=\mathbb{Z}[s,t]$. Let $X$ be an indeterminate over $R$. Then in $R[X]$, $6=(sX+2)(tX+3)$. Here $R$ is reduced, so $6$ is not indecomposable.
\item (While very strongly irreducible $\implies$ indecomposable, irreducible $\centernot\implies$ indecomposable)
This is a continuation of Example 3.4. Let $R=\mathbb{Z}_{(2)} (+) \mathbb{Z}_{4}$ (idealization), $a=(0,\bar{1})$ and $f=(1,\bar{0})+(2,\bar{0})X$. So $a$ is prime and irreducible in $R[X]$. Now $a=af^2=(af)f$. Hence $(a)=(af)$ and so $af$ is also irreducible and prime in $R[X]$. We claim that $a$ and $af$ are not indecomposable. Now $a=(af)f$ and $af=(af)f^2$, so it suffices to show that $af \not\approx_{R[X]} \alpha$, $f \not\approx \alpha$ and $f^2 \not \approx_{R[X]} \alpha$ for any $\alpha \in R$. Suppose that $af \approx_{R[X]} (c,d) \in R$. So $(0,\bar{1}) + (0,\bar{2})X = af=(c,d)((a_0,b_0)+(a_1,b_1)X+\cdots+(a_n,b_n)X^n)$ where $(a_0,b_0) \in U(R)$ and $(a_i,b_i) \in nil(R)$ for $i \geq 1$. Now $(0,\bar{1})=(c,d)(a_0,b_0)$ so $c=0$ since $(a_0,b_0)$ is a unit. Thus $(a_1,b_1) \in nil(R)=0(+)M$ gives $(0,\bar{2})=(c,d)(a_1,b_1)=(0,\bar{0})$, a contradiction. Next suppose that $f \approx_{R[X]}(c,d) \in R$. So $(1,\bar{0})+(2,\bar{0})X=f=(c,d)((a_0,b_0)+(a_1,b_1)X+\cdots+(a_n,b_n)X^n)$ where $(a_0,b_0) \in U(R)$ and $(a_i,b_i) \in nil(R)$ for $i \geq 1$. Now $(1,\bar{0})=(c,d)(a_0,b_0)$ so $(c,d) \in U(R)$. But then $(2,\bar{0})=(c,d)(a_1,b_1) \in nil(R)$, a contradiction. A similar proof shows that $f^2 \not \approx_{R[X]} (c,d) \in R$. 
\end{enumerate}
\end{ex}

%3.11
\

%%%%%%%%%%%%%%%%%%%%%

\section{Factorization of Powers of $X$}

%4.1

In this section we investigate when $X^n$ is a product of irreducible elements and when this factorization is unique. We first show that $X$ is irreducible, or equivalently, indecomposable, if and only if $R$ is indecomposable.

\begin{thm}
Let $R$ be a commutative ring and $X$ an indeterminate over $R$.
\begin{enumerate}
\item $X$ is prime $\iff$ $R$ is an integral domain.
\item The following are equivalent.
	\begin{enumerate}
	\item $X$ is irreducible.
	\item $X$ is indecomposable.
	\item $R$ is indecomposable.
	\end{enumerate}
\end{enumerate}
\end{thm}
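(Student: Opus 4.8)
The plan is to settle part (1) in essentially one line via a quotient, and to organize part (2) around the single strong assertion that indecomposability of $R$ already forces $X$ to be \emph{very strongly irreducible} in $R[X]$, from which both (a) and (b) follow formally.

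For part (1), I would use the identification $R[X]/(X)\cong R$ from Section 3. Since the constant term of $X$ is $0$, Theorem 3.1(1) shows $X$ is a nonunit, and it is plainly nonzero; hence $X$ is prime exactly when the proper ideal $(X)$ is prime, i.e. when $R[X]/(X)\cong R$ is an integral domain. That is the entire argument.

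For the forward direction of part (2), namely (c)$\Rightarrow$(a),(b), I would prove the sharper statement that $R$ indecomposable implies $X$ is very strongly irreducible; then $X$ is irreducible by the implication diagram of Section 2, and indecomposable by Theorem 3.6(1). So suppose $X=gh$ with $g=\sum a_iX^i$ and $h=\sum b_jX^j$. Comparing coefficients gives $a_0b_0=0$ (constant term) and $a_0b_1+a_1b_0=1$ (coefficient of $X$). The key move is to notice that $e:=a_0b_1$ is idempotent, since $e(1-e)=(a_0b_1)(a_1b_0)=(a_0b_0)(a_1b_1)=0$. Indecomposability of $R$ forces $e\in\{0,1\}$. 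If $e=1$ then $a_0$ is a unit, whence $a_0b_0=0$ gives $b_0=0$, so $h=Xh_1$; using that $X$ is regular in $R[X]$ (multiplication by $X$ is injective), $X=X(gh_1)$ yields $gh_1=1$, so $g$ is a unit. The case $e=0$ is symmetric and makes $h$ a unit. By Theorem 2.1(4) this proves $X$ very strongly irreducible. I expect this idempotent extraction — converting the relation $a_0b_1+a_1b_0=1$ (together with $a_0b_0=0$) into a genuine idempotent — to be the main obstacle; the remaining steps are bookkeeping.

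For the converse I would argue contrapositively: given a nontrivial idempotent $e$ of $R$ (so $e\neq 0,1$), use the explicit factorization
\[
X=\bigl(e+(1-e)X\bigr)\bigl((1-e)+eX\bigr),
\]
which is verified using only $e^2=e$ and $e(1-e)=0$; call the factors $g$ and $h$. Their constant terms are $e$ and $1-e$, both nonzero, so neither lies in $(X)$, giving $X\not\sim g$ and $X\not\sim h$; thus $X$ is not irreducible, i.e. $\neg$(c)$\Rightarrow\neg$(a). For $\neg$(c)$\Rightarrow\neg$(b), if $g\approx_{R[X]}a$ for some $a\in R$, write $g=au$ with $u=u_0+u_1X+\cdots$ a unit of $R[X]$, so $u_0$ is a unit and each $u_i$ ($i\geq1$) is nilpotent by Theorem 3.1(1). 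Comparing constant and $X$-coefficients gives $a=eu_0^{-1}$ and then $1-e=au_1=eu_0^{-1}u_1$, which is nilpotent; but $1-e$ is idempotent, so $1-e=0$, contradicting $e\neq1$. The same argument excludes $h\approx_{R[X]}a$, so $X$ is not indecomposable. Together with the forward direction this gives (a)$\Leftrightarrow$(b)$\Leftrightarrow$(c).
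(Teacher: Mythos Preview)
Your proof is correct and follows essentially the same approach as the paper: the same quotient argument for (1), the same idempotent $e=a_0b_1$ extracted from $a_0b_0=0$ and $a_0b_1+a_1b_0=1$ for (c)$\Rightarrow$(a), and the same explicit factorization $X=(e+(1-e)X)((1-e)+eX)$ with the same unit-comparison to rule out $g\approx_{R[X]}a$. The only difference is organizational: the paper runs the cycle (a)$\Rightarrow$(b)$\Rightarrow$(c)$\Rightarrow$(a) (using regularity of $X$ plus Theorem 3.6(1) for (a)$\Rightarrow$(b)), whereas you prove (c)$\Leftrightarrow$(a) and (c)$\Leftrightarrow$(b) separately, and you verify $e^2=e$ via $e(1-e)=(a_0b_0)(a_1b_1)=0$ rather than via $a_0=a_0^2b_1$.
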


\begin{proof}
\
\begin{enumerate}
\item $X$ is prime $\iff$ $(X)$ is prime $\iff$ $R \cong R[X]/(X)$ is an integral domain.
\item $(a) \implies (b)$ Suppose that $X$ is irreducible. Since $X$ is regular, it is actually very strongly irreducible. By Theorem 3.6 (1), $X$ is indecomposable. $(b) \implies (c)$ Suppose that $X$ is indecomposable, but $R$ is not indecomposable. Let $e\neq 0,1$ be a nontrivial idempotent of $R$. Now $X=(e+(1-e)X)((1-e)+eX)$. Without loss of generality, we can assume that $e+(1-e)X\approx_{R[X]} a$ where $a\in R$. So $e+(1-e)X=a(b_0+b_1X+\cdots+b_nX^n)$ where $b_0+b_1X+\cdots+b_nX^n$ is a unit of $R[X]$. So $b_0$ is a unit of $R$. Then $e=ab_0$, so $a=eb_0^{-1}$. Hence $1-e=ab_1=eb_0^{-1}b_1$, so $1-e=(1-e)^2=(1-e)eb_0^{-1}b_1=0$, a contradiction. $(c) \implies (a)$ Suppose that $R$ is indecomposable. Let $X=fg$ where $f=a_0+a_1X+\cdots+a_nX^n$ and $g=b_0+b_1X+\cdots+b_mX^m$ are in $R[X]$. So $0=a_0b_0$ and $1=a_0b_1+a_1b_0$. Then $a_0=a_0\cdot 1=a_0(a_0b_1+a_1b_0)=a_0^2b_1+a_1a_0b_0=a_0^2b_1$. Put $e=a_0b_1$. So $e^2=e$ and $(a_0)=(e)$. Since $R$ is indecomposable, $e=0$ or $e=1$. If $e=0$, then $a_0=0$. So $X=X(a_1+a_2X+\cdots+a_nX^{n-1})g$. Thus $1=(a_1+a_2X+\cdots+a_nX^{n-1})g$ so $g$ is a unit. If $e=1$, $a_0$ is a unit. So $a_0b_0=0$ gives $b_0=0$. Then as before $f$ is a unit. 
\end{enumerate}
\end{proof}

Since the map $R[X] \to R[X]$ given by $X \to X-a$, $a \in R$, is an automorphism, $X$ is irreducible $\iff$ $X-a$ is irreducible. Thus if $R$ is indecomposable, each $X-a$ is irreducible.

%4.2
We next generalize Theorem 4.1 to powers of $X$ being a product of atoms. We need the following lemma.

\begin{lem}
Let $R$ be a commutative ring and $X$ an indeterminate over $R$. Let $f \in R[X]$ be a nonunit factor of $X^n$, $n \geq 1$. Then $f$ is indecomposable if and only if $f$ is (very strongly) irreducible.
\end{lem}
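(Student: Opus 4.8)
The plan is to prove the two implications separately, after recording two preliminary facts about a nonunit factor $f$ of $X^n$. Since $X^n \neq 0$, writing $fk = X^n$ for some $k \in R[X]$ forces $f \neq 0$. More usefully, $f$ is \emph{regular}: if $cf = 0$ for some $0 \neq c \in R$, then $cX^n = cfk = 0$, which is impossible because the degree-$n$ coefficient of $cX^n$ is $c$. Thus $f$ is a regular nonzero nonunit, so by Theorem 2.1 (1) the four notions of irreducibility coincide for $f$. This justifies the parenthetical ``(very strongly)'' and means it suffices to prove that $f$ is indecomposable if and only if $f$ is very strongly irreducible.

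For the direction ``very strongly irreducible $\implies$ indecomposable'' I would simply invoke Theorem 3.6 (1), which gives this implication for every $f \in R[X]$ with no hypothesis on $f$ dividing $X^n$.

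The substance is the forward direction. By Theorem 2.1 (4), since $f$ is a nonzero nonunit, it suffices to show that in every factorization $f = gh$ with $g,h \in R[X]$, one of $g,h$ is a unit of $R[X]$. Given such a factorization, indecomposability of $f$ supplies a constant $a \in R$ with, say, $g \approx_{R[X]} a$, i.e.\ $g = au$ for some unit $u \in R[X]$. Then $fk = X^n$ gives $a(uhk) = X^n$, so the constant $a$ divides $X^n$ in $R[X]$. The key point is that a constant divisor of $X^n$ must be a unit: if $a \sum_i c_i X^i = X^n$, then comparing degree-$n$ coefficients yields $ac_n = 1$, so $a \in U(R)$. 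Hence $g = au \in U(R[X])$, as required.

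The only real obstacle is isolating this last observation. Indecomposability guarantees only that a factor is a strong associate of a \emph{constant}, which a priori is weaker than being a unit; the mechanism that upgrades ``constant'' to ``unit'' is precisely the divisibility $f \mid X^n$, through the one-line fact that a constant dividing $X^n$ satisfies $ac_n = 1$ and is therefore a unit. Everything else — the regularity of $f$ and the reductions via Theorems 2.1 and 3.6 — is routine, so once this coefficient comparison is identified the proof is essentially immediate.
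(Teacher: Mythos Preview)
Your proof is correct and follows essentially the same approach as the paper's: both directions use regularity of $f$ and Theorem~3.6(1) for the backward implication, and for the forward implication both reduce to showing that a constant $a$ dividing $X^n$ must be a unit. Your coefficient comparison $ac_n=1$ is in fact slightly more direct than the paper's version, which first establishes regularity of $a$ and then analyzes the lowest-degree term of the cofactor, but the underlying idea is identical.
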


\begin{proof}
$(\Longleftarrow)$ Let $f$ be an irreducible factor of $X^n$. Since $X^n$ is regular, so is $f$. Hence $f$ is very strongly irreducible. By Theorem 3.6, $f$ is indecomposable. \\
$(\Longrightarrow)$ Suppose that $f$ is a nonunit factor of $X^n$ that is indecomposable, say $X^n=fg$ where $g \in R[X]$. Let $f=f_1f_2$ where $f_i \in R[X]$. Since $f$ is indecomposable, say $f_1 \approx a \in R$, so $f_1=au$ where $u \in U(R[X])$. Now $X^n=au(f_2g)$. Since $a$ is a factor of $X^n$, it is regular. Write $f_2g=b_mX^m+\cdots+b_sX^s=X^m(b_m+\cdots+b_sX^{s-m})$ where $b_m \neq 0$. Since $au(0)$ is regular, $m=n$ and $1=au(0)b_m$. Thus $a$ is a unit and hence $f_1=au$ is a unit. So $f$ is very strongly irreducible.

\end{proof}

\begin{thm}
Let $R$ be a commutative ring and $X$ an indeterminate over $R$. Then the following are equivalent. 
\begin{enumerate}
\item $R$ is a finite direct product of indecomposable rings.
\item $X$ is finite product of atoms (resp., indecomposable elements).
\item Each $X^n$, $n \geq 1$, is a finite product of atoms (resp., indecomposable elements).
\item Some $X^n$, $n \geq 1$, is a finite product of atoms (resp., indecomposable elements).
\end{enumerate}
\end{thm}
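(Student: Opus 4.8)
The plan is to prove the equivalence of the four conditions via the cycle $(1) \Rightarrow (3) \Rightarrow (4) \Rightarrow (2) \Rightarrow (1)$, where the implications $(3) \Rightarrow (4)$ and $(4) \Rightarrow (2)$ are trivial or nearly so. For $(1) \Rightarrow (3)$, I would write $R = R_1 \times \cdots \times R_k$ with each $R_i$ indecomposable, and use the identification $R[X] \cong R_1[X] \times \cdots \times R_k[X]$ noted in the paragraph after Theorem 3.1. In this product, the indeterminate $X$ corresponds to the tuple $(X, X, \ldots, X)$, and hence $X^n$ corresponds to $(X^n, \ldots, X^n)$. By Theorem 4.1, each $X$ is irreducible (equivalently indecomposable) in $R_i[X]$ since $R_i$ is indecomposable. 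The strategy is then to build a factorization of $(X^n, \ldots, X^n)$ into atoms of the product ring by "lifting" the atoms $X$ from each coordinate: an element that is the atom $X$ in one coordinate $R_{i_0}[X]$ and a unit (take $1$) in every other coordinate is, by Theorem 2.1 (7), irreducible in the product $R[X]$. Concatenating $n$ such lifts for each of the $k$ coordinates expresses $(X^n, \ldots, X^n)$ as a product of $nk$ atoms, giving a factorization of $X^n$ into irreducibles (and by Lemma 4.2, equivalently into indecomposables).

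For the return implication $(2) \Rightarrow (1)$, suppose $X = f_1 \cdots f_m$ with each $f_j \in R[X]$ an atom (or indecomposable). By Lemma 4.2, since each $f_j$ is a nonunit factor of the regular element $X$, each $f_j$ is very strongly irreducible. The key observation I would exploit is the constant-term analysis: writing $X = f_1 \cdots f_m$ and comparing coefficients of $X^0$ forces the product of the constant terms to be $0$, while comparing coefficients of $X^1$ controls the idempotent structure. Rather than redo this by hand, I would argue structurally: the goal is to show $Id(R)$ has no "room" to be infinite, i.e.\ that $R$ decomposes as a finite product of indecomposable rings. By Theorem 2.2 (4), if $0$ is a product of $n$ irreducible elements then $R$ is a direct product of at most $n$ indecomposable rings; the analogue here is that a factorization of $X$ (a regular element) into $m$ atoms should likewise bound the number of idempotent "splittings."

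The main obstacle I anticipate is precisely this implication $(2) \Rightarrow (1)$: extracting from a single factorization $X = f_1 \cdots f_m$ the conclusion that $R$ is a \emph{finite} product of indecomposables, as opposed to merely ruling out infinitely many orthogonal idempotents in an ad hoc way. The cleanest route I would pursue is to show that such a factorization caps the length of any chain of nontrivial idempotent decompositions of $R$. Concretely, from $X = f_1 \cdots f_m$ one reads off, via the constant terms, an equation $0 = a_0^{(1)} \cdots a_0^{(m)}$ among the constant terms $a_0^{(j)}$ of the $f_j$; each nontrivial idempotent of $R$ would, by Theorem 2.1 (7) applied to the product decomposition it induces, split the factorization across the two factor rings, and I would argue that too many independent splittings would contradict very strong irreducibility of some $f_j$ (as in the proof of Theorem 4.1, $(b) \Rightarrow (c)$, where a nontrivial idempotent $e$ forces a factorization $X = (e + (1-e)X)((1-e)+eX)$ that cannot be absorbed into a single constant-associate). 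Bounding the number of such independent idempotents by $m$ then yields that $R$ is a finite product of at most $m$ indecomposable rings, completing the cycle.
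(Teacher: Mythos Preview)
Your cycle has a genuine gap at $(4) \Rightarrow (2)$: this implication is not ``trivial or nearly so.'' Knowing that some $X^n$ factors into atoms does not directly yield a factorization of $X$ itself into atoms; there is no general principle that ``$a^n$ a product of atoms implies $a$ is,'' and the irreducible factors of $X^n$ need bear no evident relation to factors of $X$. The paper avoids this by running the cycle in the order $(1) \Rightarrow (2) \Rightarrow (3) \Rightarrow (4) \Rightarrow (1)$, where $(2) \Rightarrow (3)$ (if $X$ is a product of atoms then so is every $X^n$) is the genuinely trivial step.

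Your sketch of the return implication is correct in spirit and matches the paper's $(4) \Rightarrow (1)$, which is proved directly for $X^n$ rather than for $X$. The clean form of the argument, which replaces your somewhat vague ``cap the length of any chain of idempotent decompositions,'' is this: given \emph{any} direct product decomposition $R = R_1 \times \cdots \times R_s$ and an atomic factorization $X^n = f_1 \cdots f_m$ in $R[X] = R_1[X] \times \cdots \times R_s[X]$, Theorem~2.1(7) forces each irreducible $f_i$ to have exactly one nonunit coordinate; since every coordinate of $X^n$ is a nonunit, this gives $s \leq m$. A uniform bound on the number of factors in any direct product decomposition of $R$ forces $R$ to be a finite product of indecomposable rings. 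This argument works verbatim with $X^n$ in place of $X$, so if you simply redirect your $(2) \Rightarrow (1)$ sketch to prove $(4) \Rightarrow (1)$ instead, the cycle $(1) \Rightarrow (3) \Rightarrow (4) \Rightarrow (1)$ closes, and $(1) \Rightarrow (2)$ is the $n=1$ case of your $(1) \Rightarrow (3)$.
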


In this case $X$ can be written uniquely up to order and unit associates as a finite product of atoms (resp., indecomposable elements). 

\begin{proof} We first do the ``atomic case". The ``indecomposable case" then follows from Lemma 4.2. $(1) \implies (2)$ Let $R=R_1 \times \cdots \times R_m$ where each $R_i$ is indecomposable. By Theorem 4.1, $X$ is an atom of $R_i[X]$. Now identifying $R[X] = R_1[X] \times \cdots \times R_m[X]$, $f_i=(1,1,\hdots,1,X,1,\hdots,1),$ where $X$ is in the $i^{th}$ coordinate, is an atom of $R_1[X] \times \cdots \times R_m[X]$. But then $X=f_1 \cdots f_m$ is a product of atoms. $(2) \implies (3) \implies (4)$ Clear. \noindent $(4) \implies (1)$ Let $R=R_1 \times \cdots \times R_s$. Suppose that $X^n=f_1 \cdots f_m$ where each $f_i \in R[X]$ is irreducible. Since $f_i$ is irreducible in $R_1[X] \times \cdots \times R_s[X]$, it has exactly one coordinate that is a nonunit. Since each coordinate of $X^n$ is a nonunit, $s \leq m$. Since there is a bound on the number of factors in a direct product decomposition of $R$, $R$ is a finite direct product of indecomposable rings. (The equivalence $(1) \iff (2)$ for the atomic case is given in \citep[Theorem 6.4]{AV1}.)

%4.3

Suppose that $R=R_1 \times \cdots \times R_m$ where $R_i$ is indecomposable. Then in $R[X]=R_1[X] \times \cdots \times R_m[X]$, $X=(X,\hdots,X)=(X,1,\hdots)(1,X,1,\hdots)\cdots(1,\hdots,1,X)$ is a product of $m$ atoms. Suppose that $X=f_1 \cdots f_s$ where $f_i$ is irreducible in $R[X]$. Let $f_i=(f_{i1},\hdots,f_{im})$ where $f_{ij} \in R_j[X]$. So for each $i$ exactly one $f_{ij}$, say $f_{{ij}_{i}}$, is not a unit and it is an atom in $R_{j_i}[X]$. Now in $R_{j_i}[X]$, $X=f_{1{j_i}}\cdots f_{i{j_i}}\cdots f_{s{j_i}}$. Since $X$ is an atom of $R_{j_i}[X]$, as $R_{j_i}$ is indecomposable, $f_{{ij}_{i}}=u_{j_i}X$ where $u_{j_i} \in U(R_{j_i}[X])$. Now $f_i=(f_{i1},\hdots,f_{{ij}_{i-1}}, u_{j_i}X,f_{{ij}_{i+1}},\hdots,f_{im})=(f_{i1},\hdots,f_{{ij}_{i-1}}, u_{j_i},f_{{ij}_{i+1}},\hdots,f_{im})(1,\hdots,1,X,1,\hdots,1)$ where the first factor is a unit of $R[X]$. Since $f_1 \cdots f_s=(X,\hdots,X)$ we must have $s=m$ and $\{j_1,\hdots,j_m\}=\{1,2,\hdots,m\}$. Thus the factorization $X=(X,\hdots,X)=(X,1,\hdots,1)\cdots(1,\hdots,1,X)$ is unique up to order and unit multiplication.

\end{proof}

\begin{cor} %cor4.4
Let $R$ be a commutative ring and $X$ an indeterminate over $R$. Then the following are equivalent.
\begin{enumerate}
\item $R$ is a finite direct product of integral domains.
\item $X$ is a product of primes.
\item Each $X^n$, $n\geq 1$, is a product of primes.
\item For some $n\geq 1$, $X^n$ is a product of primes.
\end{enumerate}
\end{cor}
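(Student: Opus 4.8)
The plan is to prove the chain of implications $(1)\Rightarrow(2)\Rightarrow(3)\Rightarrow(4)\Rightarrow(1)$, mirroring the structure of Theorem 4.3 but with ``prime'' in place of ``atom.'' Since a nonzero prime element is irreducible, the hard direction $(4)\Rightarrow(1)$ will piggyback on Theorem 4.3: a product of primes is a product of atoms, so $X^n$ being a product of primes immediately gives, via Theorem 4.3, that $R=R_1\times\cdots\times R_m$ is a finite direct product of indecomposable rings. The entire content of the corollary beyond Theorem 4.3 is therefore the upgrade from ``indecomposable'' to ``integral domain,'' and this is where I expect the real work to lie.

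For the easy implications, $(1)\Rightarrow(2)$ is a direct construction: if each $R_i$ is a domain, then $X$ is prime in each $R_i[X]$ by Theorem 4.1(1), so under the identification $R[X]=R_1[X]\times\cdots\times R_m[X]$ the element with $X$ in the $i$th coordinate and $1$ elsewhere is prime by Theorem 2.1(7); writing $X=(X,\dots,X)$ as the product of these $m$ elements exhibits $X$ as a product of primes. For $(2)\Rightarrow(3)$, if $X=q_1\cdots q_k$ with each $q_j$ prime, then $X^n=q_1^{\,n}\cdots q_k^{\,n}$ is again a product of primes, and $(3)\Rightarrow(4)$ is immediate.

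It remains to carry out $(4)\Rightarrow(1)$. Having reduced to $R=R_1\times\cdots\times R_m$ with each $R_i$ indecomposable, I would project a factorization $X^n=f_1\cdots f_k$ into primes onto each coordinate: by Theorem 2.1(7) every prime $f_\ell$ is a unit in all but one coordinate, so the $i$th projection expresses $X^n$, a regular nonunit of $R_i[X]$, as a unit times a nonempty product of primes, hence as a product of primes in $R_i[X]$. The key step is then the indecomposable case: if $R_i$ is indecomposable and $X^n=p_1\cdots p_t$ with each $p_j$ prime in $R_i[X]$, then $R_i$ is a domain. I would argue that $(p_1)$ is a prime ideal containing $X^n$ and hence containing $X$, so $p_1\mid X$ and $(X)\subseteq(p_1)\subsetneq R_i[X]$; since $X$ is irreducible in $R_i[X]$ (Theorem 4.1(2), using that $R_i$ is indecomposable) and regular, Theorem 2.1(3) forbids the strict containment $(X)\subsetneq(p_1)$, forcing $(X)=(p_1)$. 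Thus $X$ is prime, and Theorem 4.1(1) yields that $R_i$ is an integral domain. The main obstacle is precisely this last step, namely ruling out that a prime factor of $X^n$ is a proper divisor of $X$, and the cleanest route is the regularity of $X$ together with Theorem 2.1(3), rather than any direct computation with coefficients.
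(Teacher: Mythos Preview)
Your proposal is correct and follows essentially the same route as the paper: reduce $(4)\Rightarrow(1)$ to the indecomposable case via Theorem 4.3, note that a prime factor $p$ of $X^n$ must divide $X$, and then use irreducibility of $X$ (with $X$ regular) to force $p\sim X$, whence $X$ is prime and $R_i$ is a domain. The only cosmetic difference is that you invoke Theorem 2.1(3) to rule out $(X)\subsetneq(p_1)$, whereas the paper argues directly from irreducibility that $p_i\mid X$ implies $p_i\sim X$; these are the same observation.
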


\begin{proof}
$(1) \Longrightarrow (2)$ Let $R= R_1 \times \cdots \times R_m$ where $R_i$ is an integral domain. Then $X$ is prime in $R_i[X]$ so $(1,\hdots,1,X,1,\hdots,1)$ is prime in $R[X]=R_1[X] \times \cdots \times R_m[X]$. So $X=(X,\hdots,X)=(X,1,\hdots,1)\cdots(1,\hdots,1,X)$ is a product of primes. $(2) \Longrightarrow (3) \Longrightarrow (4)$ Clear. $(4) \Longrightarrow (1)$. Suppose that $X^n$ is a product of primes. Then $X^n$ is a product of atoms, so $R=R_1 \times \cdots \times R_m$ where $R_i$ is indecomposable. So $X$ is irreducible in $R_i[X]$ and $X^n$ is a product of primes in $R_i[X]$, say $X^n=p_1 \cdots p_s$. Now $p_i$ prime gives $p_i \mid X$ and since $X$ is irreducible, $p_i \sim X$. Thus $X$ is prime in $R_i[X]$ and hence $R_i$ is an integral domain.

\end{proof}

%4.4

We next determine when each $X^n$ has a unique factorization into atoms.

\begin{thm}
Let $R$ be a commutative ring and $X$ an indeterminate over $R$. Then the following are equivalent.
\begin{enumerate}
\item $R$ is reduced and is a finite direct product of indecomposable rings. 
\item For each $n \geq 1$, $X^n$ has a unique factorization into irreducibles up to order and unit multiplication.
\item For some $n \geq 2$, $X^n$ has unique factorization into irreducibles up to order and unit multiplication.
\item $X^2$ has unique factorization into irreducibles up to order and unit multiplication.
\end{enumerate}
\end{thm}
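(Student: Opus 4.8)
The plan is to route everything through the indecomposable case and a single lemma, then close the cycle $(1)\Rightarrow(2)\Rightarrow(4)\Rightarrow(3)\Rightarrow(1)$; here $(2)\Rightarrow(4)$ and $(4)\Rightarrow(3)$ are the obvious specializations (take $n=2$, resp.\ use $n=2$ as the required ``some $n\ge 2$''), so the work is in $(1)\Rightarrow(2)$ and $(3)\Rightarrow(1)$. For $(1)\Rightarrow(2)$ I would first reduce to the indecomposable case: writing $R=R_1\times\cdots\times R_m$ with each $R_i$ indecomposable and reduced, identify $R[X]=R_1[X]\times\cdots\times R_m[X]$. By Theorem 2.1(7) every irreducible of $R[X]$ has a single non-unit coordinate, so any factorization of $X^n$ into irreducibles projects coordinatewise to a factorization of $X^n$ in each $R_i[X]$. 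Thus it suffices to prove the following Main Lemma: if $R$ is reduced and indecomposable, then every irreducible factor $f$ of $X^n$ is an associate of $X$. Granting this, each $R_i[X]$ forces exactly $n$ factors, each $\sim X$, and the global factorization becomes unique up to order and units.

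To prove the Main Lemma I write $X^n=fg$ with $\deg f=p$, leading coefficient $a_p\neq 0$, and $\deg g=q$, leading coefficient $b_q\neq 0$. The key step is to show $a_p$ is a unit. Using that in a reduced ring $ab=0$ forces $arb=0$ for all $r$, a downward induction on the vanishing coefficient relations of $fg=X^n$ gives $a_pb_j=0$ whenever $p+j>n$ (in particular $p\le n$, else $a_pg=0$ would contradict $g$ regular); feeding this into the coefficient of $X^n$ yields $a_p=a_p^2b_{n-p}$, so $e:=a_pb_{n-p}$ is idempotent with $(a_p)=(e)$, the exact analogue of the idempotent built in the proof of Theorem 4.1. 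Indecomposability forces $e\in\{0,1\}$, and $e=0$ would give $a_p=0$, so $a_p$ is a unit; by symmetry $b_q$ is a unit, whence the coefficient of $X^{p+q}$ is the unit $a_pb_q$ and $p+q=n$. Reducing $fg=X^n$ modulo each minimal prime $P$, where $(R/P)[X]$ is a UFD and $X$ is prime, shows $\bar f=\bar a_pX^p$; since $R$ is reduced, $\bigcap P=0$ forces the lower coefficients of $f$ to vanish, so $f=a_pX^p=uX^p$. Finally $f$ irreducible makes $X^p$ irreducible, hence $p=1$ and $f\sim X$.

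For $(3)\Rightarrow(1)$, the mere existence of a factorization of $X^n$ into irreducibles gives, by Theorem 4.3, that $R$ is a finite product of indecomposable rings; it remains to see $R$ is reduced. If some factor $R_j$ were not reduced, choose $0\neq a\in R_j$ with $a^2=0$; then in $R_j[X]$ one has $X^n=X^{n-2}(X-a)(X+a)$, and $X$, $X-a$, $X+a$ are all irreducible (Theorem 4.1 together with the automorphism $X\mapsto X-a$). Since $X\nmid X+a$, as the constant term $a$ is nonzero, we have $X+a\not\approx X$, so this is a genuinely different factorization of $X^n$, contradicting uniqueness. Hence $R$ is reduced, which together with the product decomposition gives $(1)$.

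The main obstacle is the ``gluing'' inside the Main Lemma. Irreducible factors of $X^n$ in a reduced \emph{indecomposable} ring need not be prime (by Theorem 4.1, $X$ is prime only when $R$ is a domain), so uniqueness cannot be read off from primeness; instead one must rule out irreducible factors with zero-divisor leading coefficients, which is precisely the mechanism producing the spurious factorization $X=(\bar 2+\bar 3X)(\bar 3+\bar 2X)$ in the \emph{decomposable} ring $\mathbb{Z}_6[X]$. The idempotent extraction, combined with indecomposability and the reduced identity $ab=0\Rightarrow arb=0$, is exactly what overcomes this; everything else is bookkeeping with the product decomposition and reduction modulo minimal primes.
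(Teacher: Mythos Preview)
Your argument is correct and follows the same blueprint as the paper: reduce to the indecomposable reduced case, extract an idempotent from the coefficient relations of $X^n=fg$, use indecomposability to force it to $0$ or $1$, then pass to $R/P$ over primes $P$ to kill the remaining coefficients; and for the converse direction, exhibit $(X-a)(X+a)=X^2$ with $a^2=0$. The one genuine variation is a dual one: you run the idempotent argument on the \emph{leading} coefficient $a_p$ (downward induction gives $a_pb_j=0$ for $p+j>n$, whence $a_p=a_p^2b_{n-p}$ and $a_p$ is a unit; reduction mod primes then yields $f=a_pX^p$, and irreducibility forces $p=1$), whereas the paper runs it on the \emph{constant} term $a_0$ (upward induction gives $a_0b_0=\cdots=a_0b_{n-1}=0$, whence $a_0=a_0^2b_n$; the case $a_0$ a unit is excluded via reduction mod primes, so $a_0=0$ and $f=X\cdot(\text{unit})$ directly). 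The paper also routes $(3)\Rightarrow(4)\Rightarrow(1)$ rather than your $(3)\Rightarrow(1)$, but that is cosmetic.

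Two small slips that do not affect the argument: the ``reduced identity'' you invoke, $ab=0\Rightarrow arb=0$, is trivial in any commutative ring; what your downward induction actually needs is $a_p^2b_j=0\Rightarrow a_pb_j=0$, which follows from $(a_pb_j)^2=(a_p^2b_j)b_j=0$ and reducedness. And $(R/P)[X]$ is not a UFD for an arbitrary prime $P$, but you only use that $X$ is prime there, which holds since $R/P$ is a domain.
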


%4.5-4.6 

\begin{proof}
$(2) \implies (3)$ Clear. $(3) \implies (4)$ Suppose that $X^n$ has a unique factorization into irreducibles. By Theorem 4.3, $X^2$ is a finite product of irreducibles. Suppose that $X^2$ has two different factorizations into irreducibles. Now either $n=2$ or $X^{n-2}$ is a product of irreducibles. Thus $X^n=X^2X^{n-2}$ also has two different factorizations into irreducibles, a contradiction. $(4) \implies (1)$ By Theorem 4.3, $R$ is a finite direct product of indecomposable rings, say $R=R_1 \times \cdots \times R_m$. Suppose that $R$ is not reduced. Hence some $R_i$ is not reduced. Let $0 \neq a \in R_i$ with $a^2=0$. Then in $R_i[X]$, $X^2=X \cdot X=(X+a)(X-a)$ are two different atomic factorizations of $X^2$. But this leads to two different atomic factorizations of $X^2$ in $R[X]$, a contradiction. $(1) \implies (2)$ Since $R$ is a finite direct product of indecomposable rings, $X^n$ is a product of atoms. Since $R$ is reduced, $U(R[X])=U(R)$. Suppose that $R=R_1 \times \cdots \times R_m$ where each $R_i$ is indecomposable. An easy modification of the proof in Theorem 4.3 that $X$ has a unique factorizations shows that if $X^n$ has a unique factorization into atoms in each $R_i[X]$, then $X^n$ has a unique factorization into atoms in $R[X]$. Thus we may assume that $R$ is a reduced indecomposable ring. Since $X$ has a unique factorization into atoms by Theorem 4.3, we may assume that $n \geq 2$. Let $f=a_0+a_1X+\cdots+a_sX^s$ be an irreducible factor of $X^n$, say $X^n=fg$ where $g=b_0+b_1X+\cdots+b_tX^t$. Then $0=a_0b_0=a_0b_1+a_1b_0=\cdots=a_0b_{n-1}+a_1b_{n-2}+\cdots+a_{n-1}b_0$ and $a_0b_n+a_1b_{n-1}+\cdots+a_nb_0=1$. Now suppose that we have shown that $0=a_0b_0=a_0b_1=\cdots=a_0b_i$ where $0 \leq i \leq n-2$. Then $0=a_0\cdot 0=a_0(a_0b_{i+1}+a_1b_i+\cdots a_{i+1}b_0)=a_0^2b_{i+1}+a_1a_0b_i+\cdots+a_{i+1}a_0b_0=a_0^2b_{i+1}$. Then $(a_0b_{i+1})^2=0$, so $a_0b_{i+1}=0$ since $R$ is reduced. Hence $0=a_0b_0=a_0b_1=\cdots=a_0b_{n-1}$. So $a_0=a_0\cdot 1=a_0(a_0b_n+a_1b_{n-1}+\cdots+a_nb_0)=a_0^2b_n+a_1a_0b_{n-1}+\cdots+a_na_0b_0=a_0^2b_n$. So $e=a_0b_n$ is idempotent with $(a_0)=(e)$. Since $R$ is indecomposable, either $e=1$, so $a_0$ is a unit or $e=0$ so $a_0=0$. First suppose that $a_0$ is a unit. Let $P$ be a prime ideal of $R$. Then in $(R/P)[X]$, $\bar{a}_0+\bar{a}_1X+\cdots+\bar{a}_sX^s$ is a factor of $X^n$. Since $a_0$ is a unit, $\bar{a}_0 \neq 0$, so $\bar{a}_1=\cdots=\bar{a}_s=\bar{0}$, that is, $a_1,\hdots,a_s \in P$. Then for $i \geq 1$, $a_i \in \bigcap \{P \mid P \in Spec(R)\}=nil(R)=0$. So $f=a_0$ is a unit, a contradiction. So $e=0$ and hence $a_0=0$. So $f=a_1X+\cdots+a_sX^s=(a_1+\hdots +a_sX^{s-1})X$. Now $f$ is irreducible and regular and hence is very strongly irreducible. Thus $a_1+\hdots+a_sX^{s-1} \in U(R[X])=U(R)$. So $a_1 \in U(R)$ and $0=a_2=\cdots=a_s$. So $f=a_1X$ where $a_1 \in U(R)$. 

\end{proof}

\begin{cor}
Let $R$ be a commutative ring and $X$ an indeterminate over $R$. Then the following conditions are equivalent.
\begin{enumerate}
\item $R$ is reduced and indecomposable.
\item $S=\{uX^n \mid u \in U(R), n \geq 1\}$ is a saturated multiplicatively closed subset of $R[X]$. 
\item For each $n \geq 1$, the only (irreducible) factors of $X^n$ are $uX^m$ where $u \in U(R)$ and $0 \leq m \leq n$ $(m=1)$.
\item For some $n \geq 2$, the only irreducible factors of $X^n$ are $uX^m$ where $u \in U(R)$ and $0 \leq m \leq n$ $(m=1)$.
\item The only (irreducible) factors of $X^2$ are $uX^m$ where $u \in U(R)$ and $0 \leq m \leq 2$ $(m=1)$.
\end{enumerate}
\end{cor}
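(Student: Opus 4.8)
My plan is to run the cycle $(1)\Rightarrow(3)\Rightarrow(5)\Rightarrow(4)\Rightarrow(1)$ together with $(1)\Leftrightarrow(2)$, with essentially all of the work concentrated in $(1)\Rightarrow(3)$ and in two ``bad factor'' constructions for $(4)\Rightarrow(1)$. For $(1)\Rightarrow(3)$, let $f=a_0+\cdots+a_sX^s$ be a factor of $X^n$, say $X^n=fg$. First I would factor out the lowest power of $X$: if $m$ is the order of $f$, write $f=X^mh$ with $h(0)\neq 0$; since $X^m$ is regular, cancellation gives $X^{n-m}=hg$ and forces $0\le m\le n$. It therefore suffices to prove that a factor $h$ of a power $X^N$ with $h(0)\neq 0$ is a unit, for then $f=h(0)X^m=uX^m$. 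To do this I would reproduce the computation in the proof of Theorem 4.5 $(1)\Rightarrow(2)$: writing $a_0=h(0)$ and $g=\sum b_jX^j$, an induction using only that $R$ is reduced shows $a_0b_0=a_0b_1=\cdots=a_0b_{N-1}=0$ (multiply the vanishing coefficient of $X^{i+1}$ by $a_0$, use the inductive hypothesis to kill all but $a_0^2b_{i+1}$, and note $(a_0b_{i+1})^2=a_0^2b_{i+1}^2=0$). The coefficient of $X^N$ then gives $a_0=a_0^2b_N$, so $e=a_0b_N$ is idempotent with $(a_0)=(e)$; since $R$ is indecomposable and $a_0\neq 0$, we get $e=1$, i.e. $a_0\in U(R)$. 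Finally, reducing $h$ modulo each prime $P$ (where $(R/P)[X]$ is a domain in which $\bar a_0$ is a unit) forces $a_1,a_2,\dots\in\bigcap P=nil(R)=0$, so $h=a_0$ is a unit. The parenthetical ($m=1$) version is then immediate: among the $uX^m$ the irreducible ones are exactly the $uX$, since $X$ is irreducible by Theorem 4.1, while $uX^0$ is a unit and for $m\ge 2$ we have $uX^m=(uX^{m-1})X$, a product of two nonunits to neither of which $uX^m$ is associate.

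The implications $(3)\Rightarrow(5)\Rightarrow(4)$ are trivial specializations (take $n=2$). For $(4)\Rightarrow(1)$ I would argue contrapositively, exhibiting in each failing case a factor of a power of $X$ that is not of the form $uX^m$. If $R$ is decomposable, pick a nontrivial idempotent $e$; then $X=(e+(1-e)X)((1-e)+eX)$ shows $e+(1-e)X$ divides $X$, hence $X^n$, yet it has degree $1$ and nonzero constant term $e$, so it equals no $uX^m$ with $u\in U(R)$. If $R$ is not reduced, choose $0\neq a$ with $a^2=0$; then $X^2=(X+a)(X-a)$ exhibits $X+a$ as a factor of $X^n$ (for the relevant $n\ge 2$) that again has nonzero constant term and so is not a $uX^m$. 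Either case contradicts the ``only factors'' assertion of $(4)$, giving $(1)$.

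For $(1)\Leftrightarrow(2)$ I would note that $S$ is always multiplicatively closed, so the content is saturation, which asserts precisely that every factor of a monomial $uX^n$ is again a unit times a power of $X$ (a unit factor being the degenerate $m=0$ case). This is exactly the ``only factors'' statement of $(3)$, so $(2)$ is a reformulation of $(3)$ and the equivalence with $(1)$ follows; the same two factors $e+(1-e)X$ and $X+a$ used above show directly that a decomposable or non-reduced $R$ breaks saturation of $S$. I would flag here the minor bookkeeping point that, for saturation to make literal sense, one reads $S$ as containing the units (the $m=0$ case), matching the $0\le m\le n$ range in $(3)$.

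I expect the main obstacle to be precisely the passage from irreducible factors to \emph{arbitrary} factors in $(1)\Rightarrow(3)$: Theorem 4.5 only controls the irreducible factors of $X^n$, and the extra step is the reduction $f=X^mh$ together with the lemma ``$h(0)\neq 0\Rightarrow h\in U(R)$.'' The two hypotheses enter exactly here and cleanly separate: reducedness drives the square-zero cancellations $(a_0b_{i+1})^2=0$, while indecomposability is what pins the idempotent $e=a_0b_N$ to $0$ or $1$. Everything else is a routine specialization or an explicit counterexample factorization.
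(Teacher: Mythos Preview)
Your $(1)\Rightarrow(3)$ is correct and is precisely the computation in the paper's proof of Theorem~4.5 $(1)\Rightarrow(2)$ (the paper states the corollary without proof, so that computation \emph{is} the implicit argument), with the added wrinkle that you first strip off $X^m$ to pass from arbitrary factors to factors with nonzero constant term; this is the right way to upgrade the paper's ``irreducible factors'' conclusion to ``all factors.'' The equivalence $(1)\Leftrightarrow(2)$ and the trivial specializations are fine as well.

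The gap is in $(4)\Rightarrow(1)$. Condition~(4) as printed speaks only of \emph{irreducible} factors (``irreducible'' is not parenthesized there, unlike in (3) and (5)), yet in the decomposable case you only exhibit $e+(1-e)X$ as a factor, not an irreducible one---and it need not be: with $R=R_1\times R_2\times R_3$ and $e=(1,0,0)$ you get $(1,X,X)$, which has two nonunit coordinates. Worse, if $R$ is an atomless Boolean ring then every nonunit divisor of $X^n$ can be split nontrivially via an idempotent below its support, so $X^n$ has \emph{no} irreducible factors and (4) holds vacuously while (1) fails. So (4), read literally, is not equivalent to~(1). The intended reading is presumably either the parenthetical one matching (3) and (5)---in which case your ``bad factor'' argument is exactly right---or that the $uX$ \emph{are} the irreducible factors, which already forces $X$ irreducible and hence $R$ indecomposable by Theorem~4.1; under that reading only the not-reduced half is needed, and there you should add the one-line justification (via the remark after Theorem~4.1) that $X+a$ is irreducible before playing it against~(4). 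Either way, you should flag which reading of~(4) you are using.
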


%4.6a

It is easy to see that in $(2)-(5)$ of Corollary 4.6 we can replace $U(R)$ by $U(R[X])$. Also let us mention a related result of Gilmer and Heinzer \citep[Corollary 7]{GH}: $R$ is reduced and indecomposable if and only if the set of polynomials of $R[X]$ with unit leading coefficient is a saturated multiplicatively closed set.

%4.7-4.8
We have seen that each power of $X^n$ has unique factorization into irreducibles if and only if $R$ is reduced and a finite direct product of indecomposable rings. The smallest example of a ring in which each $X^n$ is a product of irreducibles but the factorization is not unique is $\mathbb{Z}_4.$ Here $\mathbb{Z}_4[X]$ is pr\`esimplifiable so all the forms of irreducibility coincide. Also, $\mathbb{Z}_4$ is indecomposable, so $X$ is strongly irreducible and hence indecomposable. So each $aX+b$ where $a \in \{\bar{1},\bar{3}\}$ and $b \in \mathbb{Z}_4$ is irreducible and indecomposable. Now $\bar{2}$ is prime and hence irreducible and indecomposable. It is easily checked that $\bar{2}X$ and $\bar{2}X+\bar{2}$ are indecomposable. The only atomic factorizations are $\bar{2}X=\bar{2}\cdot X=\bar{2}(X+\bar{2})$ and $\bar{2}X+\bar{2}=\bar{2}(X+\bar{1})=\bar{2}(X+\bar{3})$. Now for $n\geq 1$, $X^n+\bar{2}$ is irreducible. For each $n\geq 2$, $X^n=X^{n-2}(X+\bar{2})^2$ are two different atomic factorizations of $X^n$. Now for $n \geq 1$, $X^{2n}=(X^n+\bar{2})^2$, so $X^{2n}$ has atomic factorizations of length $2$ and $2n$. It is easily checked that any atomic factorization of $X^{2n}$ has length $l$ with $2 \leq l \leq 2n$. Likewise, for $n\geq 1$, $X^{2n+1}=X(X^n+\bar{2})^2$ and it is easily checked that any atomic factorization of $X^{2n+1}$ has length $l$ where $3 \leq l \leq 2n+1$. For $n \geq 1$, let $L(X^n)=\{m \mid f_1 \cdots f_m$ where $f_i$ is irreducible$\},$ the set of lengths of $X^n$. It is easily checked that $L(X)=\{1\}$, $L(X^2)=\{2\}$, and $L(X^3)=\{3\}$. Belshoff, Kline, and Rogers \cite{BKR} have shown that $L(X^4)=\{2,4\}$, $L(X^5)=\{3,4\}$, $L(X^6)=\{2,4,6\}$ and for $n \geq 7$ $L(X^n)=\{2,3,\hdots,n-4\} \cup \{n-2,n\}$ for $n$ even and $L(X^n)=\{3,4,\hdots,n-4\} \cup \{n-2,n\}$ for $n$ odd. In fact, they have determined $L(X^n)$ where $(R,M)$ is a local Artinian ring with $M^2=0$. 

%%%%%%%%%%%%%%%%%%%%%

\section{Unique Factorization in $R[X]$}

%5.1

It is well known that for $D$ an integral domain, $D[X]$ is a UFD if and only if $D$ is a UFD. There are a number of ways in which the notion of a UFD can be extended to commutative rings with zero divisors. In this section we investigate when $R[X]$, $R$ a commutative ring, satisfies any of these various generalizations of a UFD. While structure theories for certain of these generalizations are known, we strive to derive our results from ``first principles" using certain features of $R[X]$. 

Of course an integral domain $D$ is a UFD if (1) every nonzero nonunit of $D$ is a product of irreducibles, and (2) this factorization into irreducibles is unique up to order and associates. In the nondomain case we have a number of ways to define ``associate" and ``irreducible." We begin with the following definition:

%5.2

\begin{defn}
Let $R$ be a commutative ring and $a \in R$ a nonunit. Two factorizations of $a$ into nonunits $a=a_1 \cdots a_n=b_1 \cdots b_m$ are \emph{isomorphic} (resp., \emph{strongly isomorphic}, \emph{very strongly isomorphic}) if $n=m$ and there exists a permutation $\sigma \in S_n$ with $a_i \sim b_{\sigma(i)}$ (resp., $a_i \approx b_{\sigma(i)}$, $a_i \cong b_{\sigma(i)}$). Two factorizations of $a$ into nonunits $a=a_1 \cdots a_n=b_1 \cdots b_m$ are  \emph{homomorphic} (resp., \emph{strongly homomorphic}, \emph{very strongly homomorphic}, \emph{weakly homomorphic}) if for each $i \in \{1,\hdots,n\}$ there exists a $j \in \{1,\hdots,m\}$ with $a_i \sim b_j$ (resp., $a_i \approx b_j, a_i \cong b_j, a_i \mid b_j$) and for each $i \in \{1,\hdots,m\}$ there exists a $j \in \{1,\hdots,n\}$ with $b_i \sim a_j$ (resp., $b_i \approx a_j$, $b_i \cong a_j$, $b_i \mid a_j$).
\end{defn}

Note that isomorphic factorizations are homomorphic, but the converse may be false even if $a$ is regular. For consider the two homomorphic factorizations of $X^{15}$ into irreducibles in $\mathbb{Q}[X^2,X^3]$: $$X^{15}=X^2 \cdot X^2 \cdot X^2 \cdot X^2 \cdot X^2 \cdot X^2 \cdot X^3 = X^2 \cdot X^2 \cdot X^2 \cdot X^3 \cdot X^3 \cdot X^3.$$ Also, if $e$ is a nontrivial idempotent of a commutative ring $R$, $e=e^2$ are homomorphic but not isomorphic factorizations of $e$. However, if for each nonunit regular element of $R$ any two atomic factorizations are weakly homomorphic, then any two atomic factorizations of a regular nonunit are isomorphic.

Each form of atomicity and ``isomorphic" leads to a type of unique factorization ring as given by our next definition.

\begin{defn}
Let $R$ be a commutative ring. Let $\alpha \in \{$atomic, strongly atomic, $m$-atomic, very strongly atomic$\}$ and $\beta \in \{$isomorphic, strongly isomorphic, very strongly isomorphic$\}$. Then $R$ is an $(\alpha,\beta)$-\emph{unique factorization ring} if (1) $R$ is $\alpha$ and $(2)$ any two factorizations of a nonzero, nonunit element into irreducible elements of the type used to define $\alpha$ are $\beta$. 
\end{defn}

%5.3

Note that for any choice of $\alpha$ and $\beta$, an $(\alpha,\beta)$-unique factorization ring $R$ is pr\`esimplifiable. Thus in an $(\alpha,\beta)$-unique factorization ring, the notions of associate, strongly associate, and very strongly associate coincide and hence the notions of irreducible, strongly irreducible, $m$-irreducible, and very strongly irreducible coincide as do the notions of isomorphic, strongly isomorphic, and very strongly isomorphic factorizations.

\begin{defn}
Let $R$ be a commutative ring. Then $R$ is a \emph{unique factorization ring (UFR)} if $R$ is an $(\alpha,\beta)$-unique factorization ring for some (and hence all) $(\alpha,\beta)$. 
\end{defn}

In our terminology Bouvier \cite{B2} showed that $R$ is an ($m$-atomic, isomorphic)-unique factorization ring if and only if $R$ is either (1) a UFD, (2) an SPIR, or (3) a quasilocal ring $(R,M)$ with $M^2$ while Galovich \cite{G} gave a similar characterization of (very strongly atomic, strongly isomorphic)-unique factorization rings. So $R$ is a UFR if and only if $R$ is either (1) a UFD, (2) an SPIR, or (3) a quasilocal ring $(R,M)$ with $M^2=0$. Since a polynomial ring $R[X]$ is never quasilocal, it follows that $R[X]$ is a UFR if and only if $R[X]$ (or equivalently, $R$) is a UFD. However, we give a simple proof of this result without the use of the previously mentioned structure theory for UFRs, see Corollary 5.5.

%5.4

In \citep[Theorem 4.6]{AV1} it was shown that for a commutative ring $R$ the following conditions are equivalent: (1) $R$ is either (a) a UFD, (b) a quasilocal ring $(R,M)$ with $M^2=0$, or (c) a finite direct product of SPIRs and fields, (2) $R$ is atomic and any two factorizations of a nonzero, nonunit element into irreducibles are homomorphic, and (3) $R$ is $m$-atomic and any two factorizations of a nonzero, nonunit element into $m$-irreducibles are strongly homomorphic. And it was noted that this result could be stated for any form of atomicity except for very strongly atomic and for either homomorphic or strongly homomorphic. In the statement of this result we cannot replace ``atomic" by ``very strongly atomic" since a direct product of two or more rings where at least one is an integral domain is not very strongly atomic. Using \citep[Theorem 4.6]{AV1} one can show that the following are equivalent for a commutative ring $R$: (1) $R$ is either (a) a UFD, (b) a quasilocal ring $(R,M)$ with $M^2=0$, or (c) a finite direct product of SPIRs that are not fields and (2) $R$ is very strongly atomic and any two factorizations of a nonzero, nonunit element into very strongly irreducible elements are homomorphic (or equivalently strongly homomorphic, or very strongly homomorphic.)

%5.5

Now since $R[X]$ has infinitely many maximal ideals using the previously mentioned structure theory, we have that the following are equivalent: (1) $R[X]$ is a UFD, (2) $R[X]$ is atomic and any two factorizations of a nonzero, nonunit elements into irreducibles are homomorphic, and (3) $R[X]$ is very strongly atomic and any two factorizations of a nonzero, nonunit element into very strongly irreducible elements are very strongly homomorphic. However, we will again give a simple proof from first principles, see Corollary 5.5.

Another generalization of a UFD was given in \cite{UFRZD1}. A commutative ring $R$ was defined to be a \emph{weak UFR} if (1) $R$ is atomic and (2) any two factorizations of a nonzero nonunit of $R$ into irreducibles are weakly homomorphic. It was shown that the following conditions are equivalent: (1) $R$ is a weak UFR, (2) every nonzero nonunit of $R$ is a finite product of weakly prime elements, (3) $R$ is atomic and every irreducible elements of $R$ is weakly prime, and (4) $R$ is either a finite direct product of UFDs and SPIRs or $(R,M)$ is a quasilocal ring with $M^2=0$. (\citep[Theorem 2.3]{UFRZD1} gives that (1)-(3) are equivalent while \citep[Theorem 2.13]{UFRZD1} gives (1) and (4) are equivalent.) Thus $R[X]$ is a weak UFR if and only if $R$ is a finite direct product of UFDs. In Theorem 5.4 we give a proof of this without recourse to the structure theorem given in \cite{UFRZD1}.

Fletcher \cite{UFR} defined a ``unique factorization ring" in yet another way. Let $R$ be a commutative ring. For $r \in R$, let $U(r)=\{s \in R \mid s(r)=(r)\}$. He defined the \emph{U-decomposition} of a nonunit $a \in R$ as $a=a_1 \cdots a_k \lceil b_1 \cdots b_n \rceil$ where $a_i, b_j$ are irreducible, $a_i \in U(b_1 \cdots b_n)$ for each $i=1,\hdots,k$ and $b_j \not\in U(b_1 \cdots \hat{b_j} \cdots b_n)$ for each $j=1,\hdots,n$. He then called $R$ a ``unique factorization ring" (which we will called a \emph{Fletcher unique factorization ring}) if (1) every nonunit of $R$ has a $U$-decomposition, and (2) if $a_1 \cdots a_k \lceil b_1 \cdots b_n \rceil = a'_1 \cdots a'_{k'} \lceil b'_1 \cdots b'_{n'}\rceil$ are two $U$-decompositions of a nonunit element of $R$, then $n=n'$ and after a reordering, if necessary, $b_i \sim b_i'$ for $i=1,\hdots,n$. As any atomic factorization of an element can be ``refined" to a $U$-decomposition it is (2) that is essential. Note that if $a \in R$ is a regular nonunit, then a $U$-decomposition of $a$ is just a factorization of $a$ into irreducible elements. So in a Fletcher unique factorization ring any two factorizations of a regular nonunit into irreducible elements are isomorphic. We call $R$ a \emph{factorial ring} if (1) every regular nonunit of $R$ is a finite product of irreducible elements and (2) any two factorizations of a regular nonunit into irreducible elements are isomorphic. Thus a Fletcher unique factorization ring is a factorial ring. In \cite{SUFR} Fletcher proved that $R$ is a Fletcher unique factorization ring if and only if $R$ is a finite direct product of UFDs and SPIRs. Thus $R[X]$ is a Fletcher unique factorization ring if and only if $R$ is a finite product of UFDs. We prove this without recourse to Fletchers's structure theory in our next theorem. It is worth noting \citep[Theorem 2.1]{UFRZD1} that $R$ is a Fletcher unique factorization ring if and only if (1) $R$ is atomic and (2) any two atomic factorizations of a nonunit (possibly 0) are weakly homomorphic.

%5.7

\begin{thm} Let $R$ be a commutative ring. Then the following conditions are equivalent. 
\begin{enumerate}
\item $R$ is a finite direct product of UFDs (possibly fields).
\item Every (nonzero) nonunit of $R[X]$ is a product of prime elements.
\item Every regular nonunit of $R[X]$ is a product of prime elements.
\item $R[X]$ is factorial, that is, every regular nonunit of $R[X]$ is a product of irreducible elements and any two factorizations of a regular nonunit into irreducible elements are isomorphic.
\item Every regular nonunit of $R[X]$ is a product of irreducible elements and any two factorizations of a regular nonunit into irreducible elements are homomorphic.
\item $R[X]$ is a Fletcher unique factorization ring.
\item $R[X]$ is a weak UFR.
\end{enumerate}
\end{thm}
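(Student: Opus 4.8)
The plan is to prove a single cycle of implications that links the three natural clusters of conditions: the ``prime'' conditions $(1)$--$(3)$, the ``factorial'' conditions $(4)$--$(5)$, and the Fletcher/weak conditions $(6)$--$(7)$. Concretely I would establish $(1)\Rightarrow(2)\Rightarrow(3)\Rightarrow(4)\Rightarrow(5)\Rightarrow(4)$ together with $(4)\Rightarrow(1)$, and separately $(1)\Rightarrow(6)\Rightarrow(7)\Rightarrow(1)$; these implications put all seven statements in one strongly connected cluster. The four tools I would lean on are Theorem 2.1(7) (an element of a finite product is irreducible, resp.\ prime, exactly when all but one coordinate is a unit and the remaining coordinate is irreducible, resp.\ prime), Corollary 4.4 ($X$ is a product of primes in $R[X]$ iff $R$ is a finite product of integral domains), the fact from Section~2 that a \emph{regular} weakly prime element is prime, and the cited characterizations that $R$ is a weak UFR iff every nonzero nonunit is a product of weakly prime elements, and that $R$ is a Fletcher UFR iff $R$ is atomic and any two atomic factorizations of a nonunit (possibly $0$) are weakly homomorphic.

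I would first dispatch the routine links. For $(1)\Rightarrow(2)$, write $R=\prod_i D_i$ with each $D_i$ a UFD, identify $R[X]=\prod_i D_i[X]$, and note each $D_i[X]$ is a UFD, hence $p$-atomic; since a finite product of $p$-atomic rings is $p$-atomic (Theorem 2.2(6)(a)), every nonzero nonunit of $R[X]$ is a product of primes. Then $(2)\Rightarrow(3)$ is immediate, and $(3)\Rightarrow(4)$ is the classical argument that a factorization into primes forces any competing factorization into irreducibles to be isomorphic to it (all factors being regular, so that $\sim$, $\approx$, $\cong$ coincide). Next $(4)\Rightarrow(5)$ holds because isomorphic factorizations are homomorphic, and $(5)\Rightarrow(4)$ follows from the remark following Definition 5.1 that homomorphic (hence weakly homomorphic) factorizations of a regular nonunit are automatically isomorphic. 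For the Fletcher/weak cluster, $(1)\Rightarrow(6)$ uses Theorem 2.1(7): in $\prod_i D_i[X]$ every irreducible is prime (its single nonunit coordinate is irreducible in a UFD, hence prime), and the ring is atomic (Theorem 2.2(6)(a)); so given any two atomic factorizations of a nonunit $a$ (including $0$), each prime factor on one side divides $a$ and therefore divides a single factor on the other side, making the factorizations weakly homomorphic, whence $R[X]$ is a Fletcher UFR by the cited characterization. Finally $(6)\Rightarrow(7)$ is a direct comparison of the two characterizations, Fletcher's condition being the stronger one.

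The first substantive return is $(7)\Rightarrow(1)$. Since $X$ is a regular nonunit, the weak-UFR characterization writes $X=w_1\cdots w_k$ with each $w_i$ weakly prime; being a factor of the regular element $X$, each $w_i$ is regular, hence prime, so $X$ is a product of primes and Corollary 4.4 gives $R=\prod_i D_i$ with each $D_i$ a domain. To see each $D_i$ is a UFD, take a nonzero nonunit $f\in D_i[X]$ and apply the weak-UFR characterization to the regular element $(1,\dots,f,\dots,1)$: its weakly prime factors are regular, hence prime, and by Theorem 2.1(7) each such prime has its nonunit coordinate in slot $i$ (the other slots multiply to a unit and so consist of units). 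Reading off coordinate $i$ exhibits $f$ as a product of primes in $D_i[X]$, so $D_i[X]$, and hence $D_i$, is a UFD.

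The main obstacle is $(4)\Rightarrow(1)$. Applying factoriality to $X^2$, its factorization into irreducibles is unique up to $\sim$, which for the (regular) factors coincides with uniqueness up to unit multiple, so Theorem 4.5 forces $R$ to be reduced and a finite product of indecomposables $R=\prod_i R_i$, and factoriality descends to each $R_i[X]$ by a routine argument using Theorem 2.1(7). It then remains to promote each reduced indecomposable $R_i$ to a domain, and this is the crux: if $R_i$ is not a domain, choose $s,t\neq0$ in $R_i$ with $st=0$ and consider the single regular nonunit $(s+X)(t+X)=X\big((s+t)+X\big)$. Because $R_i$ is indecomposable, each of $X$, $s+X$, $t+X$, $(s+t)+X$ has the form $X-c$ and is therefore irreducible (the remark after Theorem 4.1), giving two factorizations of one regular element into irreducibles; because $R_i$ is reduced we have $U(R_i[X])=U(R_i)$, and comparing constant and leading coefficients shows $s+X$ is associate to neither $X$ nor $(s+t)+X$, so the two factorizations are not isomorphic, contradicting factoriality. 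Hence each $R_i$ is a domain, $R_i[X]$ is a factorial domain, i.e.\ a UFD, and $R=\prod_i R_i$ is a finite product of UFDs. I expect the delicate point throughout to be exactly this separation of ``reduced'' from ``domain'': Theorem 4.5 supplies reducedness for free, but ruling out reduced non-domains requires the explicit non-unique factorization exhibited above.
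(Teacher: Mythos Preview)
Your proof is correct and shares the paper's central idea: both arguments reduce to the zero-divisor trick $(X-a)(X-b)=X(X-(a+b))$ to force an indecomposable factor ring to be a domain. The routing differs in a few places worth noting.

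For the hard direction, the paper proves $(5)\Rightarrow(1)$ directly: it uses Theorem~4.3 (not 4.5) to write $R$ as a finite product of indecomposables, passes to one indecomposable piece, and applies the zero-divisor trick immediately, observing that $X-c\sim X-d$ forces $c=d$ because both are monic---no reducedness is needed. Your route $(4)\Rightarrow(1)$ via Theorem~4.5 first extracts reducedness and then uses $U(R_i[X])=U(R_i)$ to compare associates; this works, but the detour through Theorem~4.5 is unnecessary, and you could equally well have argued from $(5)$ without ever invoking reducedness. For the Fletcher/weak cluster, the paper links $(6)\Rightarrow(4)$ and $(7)\Rightarrow(5)$, the latter being a one-line remark (weakly homomorphic atomic factorizations of a regular element are homomorphic). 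Your $(7)\Rightarrow(1)$ instead factors $X$ into weakly primes, promotes them to primes via regularity, and invokes Corollary~4.4; this is a pleasant alternative that makes direct use of Corollary~4.4 but is more work than the paper's shortcut through $(5)$. Your $(1)\Rightarrow(6)$ spells out the weakly-homomorphic verification using primality, whereas the paper simply cites that a finite product of Fletcher UFRs is Fletcher. In sum: same key lemma, slightly different wiring, with the paper's path being a bit leaner at the crux.
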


\begin{proof}
$(1) \implies (2)$ Suppose that $R=R_1 \times \cdots \times R_n$ where each $R_i$ is a UFD. Then each $R_i[X]$ is a UFD and $R[X]=R_1[X] \times \cdots \times R_n[X]$. But it is easily checked that in a direct product of UFDs, each nonunit is a product of prime elements. $(2) \implies (3)$ Clear. $(3) \implies (4)$ Now a prime element is irreducible, so every regular nonunit of $R[X]$ is a product of irreducible elements. Moreover, since a regular irreducible element is a product of primes, it is itself prime. But it is well known that the factorization of a regular element into primes is unique up to order and associates. $(4) \implies (5)$ Clear.
$(5) \implies (1)$ Since $X$ is a product of irreducible elements, $R=R_1 \times \cdots \times R_n$ where each $R_i$ is indecomposable. Now $R[X]=R_1[X] \times \cdots \times R_n[X]$ and it is easily checked that each $R_i[X]$ satisfies (5). Thus we can assume that $R$ is indecomposable and we must prove that $R$ is an integral domain (for then $R$ is a UFD). Suppose that there exist nonzero elements $a$ and $b$ of $R$ with $ab=0$. Since $R$ is indecomposable, $X, X-a, X-b$, and $X-(a+b)$ are irreducible, so $(X-a)(X-b)=X^2-(a+b)X=X(X-(a+b))$ are two factorizations of the regular element $X^2-(a+b)X$ into irreducibles that are not homomorphic since $X-c$ and $X-d$ are associates if and only if $c=d$. $(6) \implies (4)$ In the paragraph preceding this theorem we remarked that a Fletcher unique factorization ring is a factorial ring. $(1) \implies (6)$ $R[X]$ is a finite direct product of UFDs. Now certainly a UFD is a Fletcher unique factorization ring and it is easily checked that a direct product of Fletcher unique factorization rings is a Fletcher unique factorization ring. Thus $R[X]$ is a Fletcher unique factorization ring.
$(1) \implies (7)$ Here $R[X]$ is a finite direct product of UFDs and it is easily checked that a finite direct product of UFDs is a weak UFR. $(7) \implies (5)$ If for every regular nonunit any two atomic factorizations are weakly homomorphic, they are actually homomorphic. 

\end{proof}

The equivalence of (1), (4), and (6) of Theorem 5.4 is given in \citep[Theorem 3.8]{UFRZD} (also, see \cite{UFRZDC}) where we use the term UFR for a Fletcher unique factorization ring. The proof given there involves Krull rings.

%5.9

\begin{cor}
For a commutative ring $R$ the following conditions are equivalent.
\begin{enumerate}
\item $R[X]$ (or equivalently $R$) is a UFD.
\item $R[X]$ is a unique factorization ring.
\item $R[X]$ is atomic and any two factorizations of nonzero nonunits into irreducibles are homomorphic.
\end{enumerate}
\end{cor}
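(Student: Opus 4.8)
The plan is to run the cycle $(1)\Rightarrow(2)\Rightarrow(3)\Rightarrow(1)$, extracting the hard direction from Theorem 5.4 and then upgrading ``finite direct product of UFDs'' to ``single UFD.'' For $(1)\Rightarrow(2)$: if $R[X]$ is a UFD it is an integral domain, hence pr\`esimplifiable and atomic, and its factorizations into irreducibles are unique up to order and associates, that is, isomorphic; so $R[X]$ is an (atomic, isomorphic)-unique factorization ring and therefore a UFR by Definition 5.3. For $(2)\Rightarrow(3)$: a UFR is an $(\alpha,\beta)$-unique factorization ring for every $(\alpha,\beta)$, so taking $\alpha=\,$atomic and $\beta=\,$isomorphic shows $R[X]$ is atomic with isomorphic factorizations into irreducibles, and isomorphic factorizations are in particular homomorphic (as noted after Definition 5.1). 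Thus (3) holds.

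The real content is $(3)\Rightarrow(1)$. First I would observe that (3) implies condition (5) of Theorem 5.4: atomicity makes every regular nonunit of $R[X]$ a product of irreducibles, and the homomorphy hypothesis of (3), applied to the (nonzero) regular nonunits, gives exactly the homomorphy demanded there. Theorem 5.4 then yields $R=R_1\times\cdots\times R_n$ with each $R_i$ a UFD and $R[X]=R_1[X]\times\cdots\times R_n[X]$. It remains to force $n=1$, and this is the main obstacle: a finite product of UFDs is already factorial and a weak UFR by Theorem 5.4, so to rule out nontrivial products I must exploit the extra rigidity of (3)---genuine homomorphy rather than mere \emph{weak} homomorphy.

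To do so, suppose $n\ge 2$ and regroup $R=R_1\times R_2$ with both factors nonzero and $R_2$ a nonzero finite direct product of UFDs. Consider the zero divisor $a=(X,0)\in R_1[X]\times R_2[X]$, which is a nonzero nonunit, and fix a factorization $a=e_1\cdots e_k$ into irreducibles supplied by atomicity. By Theorem 2.1(7) each $e_i$ has all but one coordinate a unit, and an $e_i$ can be associate to an element of the form $(1,d)$ only if $e_i=(\text{unit},\beta)$ with $\beta\sim d$; only finitely many such $\beta\in R_2[X]$ occur. Since $R_2[X]$, being a finite direct product of UFD polynomial rings, has infinitely many pairwise non-associate irreducibles, I can choose an irreducible $d\in R_2[X]$ with $(1,d)\not\sim e_i$ for all $i$. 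Because $(1,d)(X,0)=(X,0)$, the factor $(1,d)$ is absorbed, so $a=(1,d)\,e_1\cdots e_k$ is a second factorization of $a$ into irreducibles whose factor $(1,d)$ is associate to no $e_i$. Hence the two factorizations are not homomorphic, contradicting (3). Therefore $n=1$ and $R=R_1$ is a UFD; equivalently $R[X]$ is a UFD, which is (1).

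I would close by noting that this construction is precisely what separates (3) from the weaker conditions of Theorem 5.4: the two factorizations of $a$ are necessarily \emph{weakly} homomorphic, since $R[X]$ is a weak UFR by Theorem 5.4, so it is exactly the gap between weak homomorphy and homomorphy that the absorbing factor $(1,d)$ detects. Pinning down that gap---rather than the routine reduction to Theorem 5.4---is where the work of the argument lies.
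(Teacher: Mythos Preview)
Your proof is correct and follows the paper's overall strategy: the easy implications $(1)\Rightarrow(2)\Rightarrow(3)$ are handled the same way, and for $(3)\Rightarrow(1)$ you, like the paper, invoke Theorem~5.4 to write $R=D_1\times\cdots\times D_n$ with each $D_i$ a UFD and then argue by contradiction that $n=1$.

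The difference lies in how the contradiction is produced. The paper observes that $(0,1,\ldots,1)$ is already irreducible in $R[X]=D_1[X]\times\cdots\times D_n[X]$ (because $0$ is irreducible in the domain $D_1[X]$), and then the single identity
\[
(0,1,\ldots,1)=(0,1,\ldots,1)\,(X,1,\ldots,1)
\]
gives two non-homomorphic atomic factorizations at once, since $(X,1,\ldots,1)\not\sim(0,1,\ldots,1)$. Your route instead takes $a=(X,0)$, factors it as $e_1\cdots e_k$ via atomicity, and then appeals to the infinitude of non-associate irreducibles in $R_2[X]$ to manufacture an absorbing factor $(1,d)$ avoiding all the $e_i$. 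This works, but it is more elaborate than necessary: the infinitude argument and the unspecified factorization of $(X,0)$ are bypassed entirely once one notices that an element with a zero coordinate over a domain factor is itself irreducible. In short, the paper exploits ``$0$ is irreducible in a domain'' to get a one-line contradiction, whereas your argument reconstructs the same phenomenon indirectly.
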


\begin{proof}

$(1) \implies (2) \implies (3)$ Clear. $(3) \implies (1)$ By $(5) \implies (1)$ of Theorem 5.4, $R=D_1 \times \cdots \times D_n$ where each $D_i$ is a UFD. Suppose that $n>1$. Then $(0,1,\hdots,1)$ and $(X,1,\hdots,1)$ are irreducible elements of $R[X]=D_1[X] \times \cdots \times D_n[X]$. But $(0,1,\hdots,1)=(0,1,\hdots,1)(X,1,\hdots,1)$ are two nonhomomorphic factorizations of $(0,1,\hdots,1)$ into irreducibles.

\end{proof}

%5.10

We next discuss a theory of factorization introduced in \cite{CAV}. Let $R$ be a commutative ring. By a \emph{$\mu$-factorization} of a nonunit $a \in R$ we mean a factorization $a=\lambda a_1 \cdots a_n$ where $\lambda \in U(R)$ and each $a_i$ is a nonunit. A \emph{factorization} of $a$ is a $\mu$-factorization  with $\lambda=1$ (which is then omitted). The $\mu$-factorization $\lambda a_1 \cdots a_n$ is \emph{(strongly) reduced} if $\lambda a_1 \cdots a_n \neq \lambda a_1 \cdots \hat{a_i} \cdots a_n$ ($\lambda a_1 \cdots a_n \neq \lambda a_{i_1} \cdots a_{i_j}$ for any proper subset $\{i_1,\hdots,i_j\}$ of $\{1,\hdots,n\}$) and is \emph{(strongly) $\mu$-reduced} if $\lambda a_1 \cdots a_n \neq \lambda' a_1 \cdots \hat{a_i} \cdots a_n \hspace{1mm} (\lambda a_1 \cdots a_n \neq \lambda' a_{i_1} \cdots a_{i_j})$ for any proper subset $\{i_1,\hdots, i_j\}$ of $\{1,\hdots,n\})$. So we have

\

\vspace{-5mm}
\[
\xymatrix@C=0.45cm@1{ 
 &\text{strongly $\mu$-reduced } \ar@{=>}[r] \ar@{=>}[d] & \text{ strongly reduced} \ar@{=>}[d]\\
 &\text{ $\mu$-reduced } \ar@{=>}[r] &\text{ reduced}}\\
\] 

\

\noindent and it is easily seen that none of these implications can be reversed. Note that if $a \in R$ is a regular nonunit, then any $\lambda$-factorization is strongly $\mu$-reduced. Also, it is easily seen that any $(\mu$-) factorization can be ``reduced" to a strongly $(\mu$-) factorization. Then $R$ is a \emph{(weak) [strongly] $\mu$-reduced unique factorization ring} if (1) $R$ is atomic and (2) for each (nonzero) nonunit $a \in R$, if $a=\lambda_1 a_1 \cdots a_n = \lambda_2 b_1 \cdots b_m$ are two [strongly] $\mu$-reduced atomic factorizations of $a$, then $n=m$, and after re-ordering, if necessary, $a_i \sim b_i$. And $R$ is a \emph{(weak) [strongly] reduced unique factorization ring} if (1) $R$ is atomic and (2) if $a \in R$ is a (nonzero) nonunit of $R$ and $a=\lambda_1 a_1 \cdots a_n = \lambda_2 b_1 \cdots b_m$ are two atomic [strongly] reduced factorizations of $a$, then $n=m$ and after re-ordering, if necessary, $a_i \sim b_i$. (Note that in (2) there is no loss in generality in just taking $\lambda=1$ and hence omitting it.)

Thus we have

\

\vspace{-5mm}
\[
\xymatrix@C=0.45cm@1{ 
 &R \text{ strongly $\mu$-reduced } \ar@{<=}[r] \ar@{<=}[d] & R \text{ strongly reduced} \ar@{<=}[d]\\
 &R \text{ $\mu$-reduced } \ar@{<=}[r] &R \text{ reduced}}\\
\] 

\

\noindent and in the next paragraph we note that the two vertical implications are actually equivalences. 

%5.11

Theorem 3.3 \cite{CAV} gave that $R$ is a (strongly) $\mu$-reduced UFR if and only if $R$ is a finite direct product of UFDs and SPIRs while Theorem 3.4 \cite{CAV} gave that $R$ is a (strongly) reduced UFR if and only if $R$ is a UFD, SPIR, or a finite direct product $R=D_1 \times \cdots \times D_n$ where each $D_i$ is a UFD with $U(D_i)=\{1\}$. So in the case of a polynomial ring $R[X]$ we have that $R[X]$ is a (strongly) $\mu$-reduced UFR if and only if $R$ is a finite direct product of UFDs and $R[X]$ is a (strongly) reduced UFR if and only  if $R$ is a UFD or $R=D_1 \times \cdots \times D_n$ where $D_i$ is a UFD with $U(D_i)=\{1\}$. We next give simple proofs of these results for $R[X]$ using Theorem 5.4.

\begin{thm}
For a commutative ring $R$, the following conditions are equivalent. 
\begin{enumerate}
\item $R[X]$ is a (weak) strongly $\mu$-reduced UFR.
\item $R[X]$ is a (weak) $\mu$-reduced UFR.
\item $R$ is a finite direct product of UFDs.
\end{enumerate}
\end{thm}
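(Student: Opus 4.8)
The plan is to establish the cycle $(3) \Rightarrow (2) \Rightarrow (1) \Rightarrow (3)$, treating the parenthetical ``weak'' uniformly and letting Theorem 5.4 supply the passage back to finite direct products of UFDs. The implication $(2) \Rightarrow (1)$ is immediate: every strongly $\mu$-reduced factorization is in particular $\mu$-reduced, so if all $\mu$-reduced atomic factorizations of a (nonzero) nonunit are unique, then a fortiori all strongly $\mu$-reduced ones are. This is precisely the left-hand vertical implication recorded in the diagram preceding the theorem, and it reads verbatim for the weak versions.

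For $(1) \Rightarrow (3)$ I would first reduce a (weak) strongly $\mu$-reduced UFR to a factorial ring. The key point, already noted in the text, is that if $a$ is a regular nonunit then every $\mu$-factorization of $a$ is automatically strongly $\mu$-reduced: deleting a factor $a_i$ from $a = \lambda a_1 \cdots a_n$ and re-scaling by a unit $\lambda'$ would, after cancelling the regular element $a$, force the nonunit $a_i$ to be a unit. Hence in a (weak) strongly $\mu$-reduced $R[X]$ the defining uniqueness clause applies to the ordinary irreducible factorizations of any regular nonunit (regular elements are nonzero, so the weak hypothesis already covers them), so any two such factorizations have equal length with pairwise associate factors, i.e.\ are isomorphic. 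Together with atomicity this says exactly that $R[X]$ is factorial, whence Theorem 5.4, $(4)\Rightarrow(1)$, gives that $R$ is a finite direct product of UFDs.

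The substance lies in $(3) \Rightarrow (2)$, and this is where I expect the real work. Writing $R = R_1 \times \cdots \times R_n$ with each $R_i$ a UFD, identify $R[X] = S_1 \times \cdots \times S_n$ with each $S_i = R_i[X]$ a UFD, hence a domain; the product is atomic, $0$ itself being the product of the coordinate atoms $(1,\dots,0,\dots,1)$. By Theorem 2.1(7) every irreducible element of the product has exactly one non-unit coordinate, so each irreducible factor is ``supported'' at a single index, and I would analyze a $\mu$-reduced factorization $a = \lambda f_1 \cdots f_m$ coordinate by coordinate. Where $a^{(i)}$ is a nonzero nonunit, the factors supported at $i$ must reproduce the unique UFD factorization of $a^{(i)}$ in $S_i$, with no redundancy possible since deleting such a factor alters $a^{(i)}$ by a non-unit that no unit $\lambda'$ can restore. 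Where $a^{(i)} = 0$, the product of $i$th coordinates must vanish in the domain $S_i$, forcing at least one supported factor with $i$th coordinate $0$; two or more supported factors there would permit deleting one and absorbing the lost unit into $\lambda'$, contradicting $\mu$-reducedness, so there is exactly one, of the form $(1,\dots,0,\dots,1)$ up to associates. Where $a^{(i)}$ is a unit there are no supported factors. Thus the multiset of associate classes of the $f_j$ is completely determined by $a$, yielding $(2)$; since the argument covers $a = 0$, it gives the non-weak conclusion, and the weak statement follows because a $\mu$-reduced UFR is a fortiori a weak one.

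I expect the coordinate-wise bookkeeping in $(3)\Rightarrow(2)$ to be the main obstacle, specifically pinning down how $\mu$-reducedness excludes extra zero-supported factors while leaving the genuine UFD factors intact; once that is in place the matching of lengths and associate classes is forced. The rest is formal: $(3)\Rightarrow(2)\Rightarrow(1)\Rightarrow(3)$ closes the equivalence for the non-weak statement, the same three steps read verbatim for the weak statement, and the two reductions to factoriality let Theorem 5.4 carry all the structural weight.
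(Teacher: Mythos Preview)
Your argument is correct and follows the same cycle $(3)\Rightarrow(2)\Rightarrow(1)\Rightarrow(3)$ as the paper, with the same use of Theorem~5.4 for $(1)\Rightarrow(3)$ via factoriality of regular elements. The only difference is that where the paper disposes of $(3)\Rightarrow(2)$ in one line (``it is easily checked that a finite direct product of UFDs is a $\mu$-reduced UFR''), you actually carry out the coordinate-wise verification; your analysis of the three cases ($a^{(i)}$ a nonzero nonunit, $a^{(i)}=0$, $a^{(i)}$ a unit) is sound and supplies exactly what the paper leaves implicit.
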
  

\begin{proof}
$(3) \implies (2)$ $R[X]$ is a finite direct product of UFDs and it is easily checked that a finite direct product of UFDs is a $\mu$-reduced UFR. $(2) \implies (1)$ Clear. $(1) \implies (3)$ Assume that $R[X]$ is a weak strongly $\mu$-reduced UFR. Then for a regular nonunit $f \in R[X]$ any two atomic factorizations of $f$ are isomorphic, that is, $R[X]$ is factorial. By Theorem 5.4, $R$ is a finite direct product of UFDs.

\end{proof}

%5.12

\begin{thm}
For a commutative ring $R$, the following conditions are equivalent.
\begin{enumerate}
\item $R[X]$ is a (weak) strongly reduced UFR.
\item $R[X]$ is a (weak) reduced UFR.
\item $R$ is either a UFD or $R=D_1 \times \cdots \times D_n$ where $D_i$ is a UFD with $U(D_i)=\{1\}$.
\end{enumerate}
\end{thm}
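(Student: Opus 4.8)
The plan is to mirror the proof of Theorem 5.6: reduce the essential direction to the factorial criterion of Theorem 5.4, and then isolate the extra role played by the unit groups. I would establish the cycle $(3)\Rightarrow(2)\Rightarrow(1)\Rightarrow(3)$, using in each step the strongest form available in the conclusion and the weakest in the hypothesis, so that the weak and non-weak versions of (1) and (2) all collapse onto (3). The step $(2)\Rightarrow(1)$ is immediate: a strongly reduced factorization is a fortiori reduced, so if any two reduced atomic factorizations of a (nonzero) nonunit are isomorphic, then so are any two strongly reduced ones; this argument is identical in the weak and non-weak cases.

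For $(3)\Rightarrow(2)$ I would treat the two alternatives of (3) separately. If $R$ is a UFD, then $R[X]$ is a UFD; in a domain every factorization of a nonzero nonunit into irreducibles is automatically reduced, since deleting a nonunit factor would, by cancellation, force it to be a unit, and the only reduced factorization of $0$ is $0$ itself, so uniqueness up to associates is exactly the UFD property. If instead $R=D_1\times\cdots\times D_n$ with each $D_i$ a UFD satisfying $U(D_i)=\{1\}$, then $R[X]=D_1[X]\times\cdots\times D_n[X]$ is a finite product of UFDs each with trivial unit group, whence $U(R[X])=\{1\}$ and ``associate'' means ``equal.'' Arguing coordinatewise, a reduced factorization of $a=(a_1,\dots,a_n)$ must use in each coordinate $i$ precisely the unique irreducible factors of $a_i$, with a single copy of $0$ when $a_i=0$; any extra factor sitting over a zero coordinate would be deletable, contradicting reducedness. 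Hence the reduced factorization is unique and $R[X]$ is a reduced UFR. (Alternatively one may quote Theorem 3.4 of \cite{CAV}.)

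The substance is $(1)\Rightarrow(3)$, and here Theorem 5.4 does the first half. Assume $R[X]$ is a weak strongly reduced UFR. Every factorization of a regular nonunit of $R[X]$ into irreducibles is automatically strongly $\mu$-reduced, hence strongly reduced, so the defining condition forces any two such factorizations to be isomorphic; thus $R[X]$ is factorial and Theorem 5.4 gives $R=D_1\times\cdots\times D_n$ with each $D_i$ a UFD. It remains to show that either $n=1$ or every $U(D_i)=\{1\}$. Suppose $n\geq 2$ and, after relabeling, $u\in U(D_1)$ with $u\neq 1$. Let $\epsilon_j\in R[X]$ denote the element with a $0$ in coordinate $j$ and $1$ elsewhere, and consider the nonzero nonunit idempotent $e=(1,0,\dots,0)$, which has the two factorizations $e=\epsilon_2\epsilon_3\cdots\epsilon_n$ and $e=(u,0,1,\dots,1)(u^{-1},0,1,\dots,1)\epsilon_3\cdots\epsilon_n$. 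By Theorem 2.1 (7) all factors are irreducible in $R[X]$, and one checks that both factorizations are strongly reduced; but their lengths are $n-1$ and $n$, so they are not isomorphic. Since $e$ is a nonzero nonunit, this contradicts even the weak strongly reduced UFR hypothesis, forcing trivial unit groups once $n\geq 2$. Thus (3) holds, and because the witness $e$ is nonzero while (3) delivers the non-weak reduced UFR of the previous paragraph, all four notions in (1) and (2) coincide with (3).

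The main obstacle is exactly the construction in the last step, namely converting a nontrivial unit into a genuine failure of reduced uniqueness. The governing idea is that a unit $u\neq 1$ lets one split the idempotent irreducible $\epsilon_2$ as $(u,0,1,\dots,1)(u^{-1},0,1,\dots,1)$ into two irreducibles, neither of which is deletable: removing either leaves $u^{\pm1}\neq 1$ in the first coordinate, so the refined factorization stays reduced while its length grows by one. Confirming strong reducedness of the length-$n$ factorization requires the short check that restoring the zeros in coordinates $2,\dots,n$ forces all the $\epsilon_j$ and at least one split factor into any sub-product realizing $e$, after which matching the $1$ in coordinate $1$ forces both split factors, hence the whole set; this is precisely the point at which trivial unit groups are seen to be exactly what rules out such length-changing refinements.
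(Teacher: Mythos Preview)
Your proof is correct and follows the same route as the paper's: the cycle $(3)\Rightarrow(2)\Rightarrow(1)\Rightarrow(3)$, with $(1)\Rightarrow(3)$ handled by invoking Theorem~5.4 for factoriality and then exhibiting two non-isomorphic strongly reduced atomic factorizations of a nonzero idempotent when $n\geq 2$ and some $U(D_i)\neq\{1\}$. The only difference is the choice of witness: the paper uses the simpler element $(0,1,\dots,1)$ (one zero coordinate), obtaining factorizations of lengths $1$ and $2$ via $(0,1,\dots,1)=(0,\dots,u,\dots,1)(0,\dots,u^{-1},\dots,1)$, whereas you use $(1,0,\dots,0)$ and obtain lengths $n-1$ and $n$.
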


\begin{proof}
$(3) \implies (2)$ Here either $R[X]$ is a UFD or $R[X]=D_1[X] \times \cdots \times D_n[X]$ is a UFD with $U(D[X_i])=\{1\}$. In either case it is easily checked that $R[X]$ is a reduced UFR. $(2) \implies (1)$ Clear. $(1) \implies (3)$ Suppose that $R[X]$ is a weak strongly reduced UFR. Then as in $(1) \implies (2)$ of Theorem 5.6 $R[X]$ is factorial and hence $R=D_1 \times \cdots \times D_n$ where $D_i$ is a UFD. Suppose that $n >1$. We need each $U(D_i[X])=U(D_i)=\{1\}$. Now suppose that some $\mid U(D_i) \mid > 1$; let $u \in U(D_i) \backslash \{1\}$. Then $$(0,1,\hdots,1)=(0,1,\hdots,1,u,1,\hdots)(0,1,\hdots,1,u^{-1},1,\hdots1)$$ where $u$ and $u^{-1}$ appear in the $i^{th}$ coordinate are two strongly reduced atomic factorizations of $(0,1,\hdots,1)$, a contradiction. 

\end{proof}

It is interesting to note that while Theorems 3.3 and 3.4 of \cite{CAV} require the factorization of $0$ to be unique, Theorems 5.6 and 5.7 do not. 

Our last result of this section summarizes the various unique factorization characterizations for $R[X]$ and extends them to several variables.

\begin{thm}
Let $R$ be a commutative ring and $X$ an indeterminate over $R$. 
\begin{enumerate} 
\item The following are equivalent.
	\begin{enumerate}
	\item $R$ is a UFD.
	\item $R[X]$ is a UFR (resp., is atomic and any two factorizations of nonzero nonunits into irreducibles are homomorphic).
	\item For any set $\{X_{\alpha}\}_{\alpha \in \Lambda}$, $|\Lambda| \geq 1$, of indeterminates over $R$, $R[\{X_{\alpha}\}_{\alpha \in \Lambda}]$ is a UFR (resp., is atomic and any two factorizations of nonzero nonunits into irreducibles are homomorphic).
	\item For some set $\{X_{\alpha}\}_{\alpha \in \Lambda}$, $|\Lambda| \geq 1$, of indeterminates over $R$, $R[\{X_{\alpha}\}_{\alpha \in \Lambda}]$ is a UFR (resp., is atomic and any two factorizations of nonzero nonunits into irreducibles are homomorphic.)
	\end{enumerate}
	
\item The following are equivalent.
	\begin{enumerate}
	\item $R$ is a finite direct product of UFDs.
	\item $R[X]$ is factorial (resp., a weak UFR, a Fletcher UFR, a (weak) [strong] $\mu$-reduced UFR.)
	\item For any set $\{X_{\alpha}\}_{\alpha \in \Lambda}$, $|\Lambda |\geq 1$, of indeterminates over $R$, $R[\{X_{\alpha}\}_{\alpha \in \Lambda}]$ is factorial (resp., a weak UFR, a Fletcher UFR, a (weak) [strong] $\mu$-reduced UFR).
	\item For some set $\{X_{\alpha}\}_{\alpha \in \Lambda}$, $|\Lambda | \geq 1$, of indeterminates over $R$, $R[\{X_{\alpha}\}_{\alpha \in \Lambda}]$ is factorial (resp., a weak UFR, a Fletcher UFR, a (weak) [strong] $\mu$-reduced UFR).
	\end{enumerate}

\item The following are equivalent.
	\begin{enumerate}
	\item $R$ is either a UFD or $R=D_1 \times \cdots \times D_n$ where each $D_i$ is a UFD with $U(D_i)=\{1\}$. 
	\item $R[X]$ is a (weak) [strongly] reduced UFR.
	\item For any set $\{X_{\alpha}\}_{\alpha \in \Lambda}$, $|\Lambda | \geq 1$, of indeterminates over $R$, $R[\{X_{\alpha}\}_{\alpha \in \Lambda}]$ is a (weak) [strongly] reduced UFR.
	\item For some set $\{X_{\alpha}\}_{\alpha \in \Lambda}$, $|\Lambda | \geq 1$, of indeterminates over $R$, $R[\{X_{\alpha}\}_{\alpha \in \Lambda}]$ is a (weak) [strongly] reduced UFR.
	\end{enumerate}	
\end{enumerate}
\end{thm}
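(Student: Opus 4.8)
The plan is to reduce everything to Theorem 5.4, Corollary 5.5, and Theorems 5.6 and 5.7, which already dispose of the single-variable case. The theorem to be proved is really three separate statements, each of the shape: a ring-theoretic condition on $R$ is equivalent to $R[X]$ having a certain factorization property, and this is equivalent to $R[\{X_\alpha\}]$ having that property for \emph{some}, equivalently \emph{every}, nonempty indexing set $\Lambda$. In each part the implication $(c)\implies(d)$ is trivial (``for all'' implies ``for some''), and $(b)$ is just $(c)$ or $(d)$ specialized to $|\Lambda|=1$. So the real content is a two-way passage between the one-variable results and the many-variable setting.

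First I would record the organizing observation that for a set $\{X_\alpha\}_{\alpha\in\Lambda}$ of indeterminates and a fixed $\beta\in\Lambda$, one has the identification $R[\{X_\alpha\}_{\alpha\in\Lambda}] \cong S[X_\beta]$ where $S = R[\{X_\alpha\}_{\alpha\neq\beta}]$, and that $S$ is itself a polynomial ring over $R$ (in possibly zero, finitely many, or infinitely many variables). The key structural facts I would invoke are: $R$ is reduced if and only if each polynomial ring over $R$ is reduced; $R$ is indecomposable if and only if $R[\{X_\alpha\}]$ is (since idempotents are unaffected by adjoining indeterminates, as noted after Theorem 3.1); and a finite direct product decomposition $R=R_1\times\cdots\times R_n$ passes to $R[\{X_\alpha\}] = R_1[\{X_\alpha\}]\times\cdots\times R_n[\{X_\alpha\}]$. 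I would also use the classical fact that $D$ is a UFD if and only if $D[\{X_\alpha\}_{\alpha\in\Lambda}]$ is a UFD for any nonempty $\Lambda$ (which reduces to the one-variable UFD theorem plus that a union of an ascending chain of UFDs in finitely many of the variables is a UFD).

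The engine of each part is this. For the forward directions $(a)\implies(c)$, I would start from $R$ having the stated structure (a UFD in part (1); a finite product of UFDs in part (2); the mixed condition of part (3)), pass that structure to $S=R[\{X_\alpha\}_{\alpha\neq\beta}]$ using the preservation facts above (e.g. a UFD stays a UFD, a finite product of UFDs stays a finite product of UFDs, and in part (3) a UFD with trivial unit group stays a UFD but its unit group may \emph{grow}---so I would handle part (3) carefully, see below), and then apply the relevant one-variable theorem to conclude that $S[X_\beta]=R[\{X_\alpha\}_{\alpha\in\Lambda}]$ has the desired property. For the reverse directions $(d)\implies(a)$, given the factorization property on $R[\{X_\alpha\}_{\alpha\in\Lambda}]\cong S[X_\beta]$, the one-variable theorem (5.4, 5.5, 5.6, or 5.7) applied to the base ring $S$ yields that $S$ has the appropriate structure, and then I would descend from $S$ to $R$ by the same preservation facts run in reverse, or more simply by noting that $R$ embeds as a retract of $S$ (the evaluation $X_\alpha\mapsto 0$) so that the finite-product/UFD structure restricts back to $R$.

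The step I expect to be the main obstacle is part (3), specifically the unit-group condition $U(D_i)=\{1\}$. Unlike being reduced, indecomposable, or a UFD, the property ``has trivial unit group'' is \emph{not} preserved under adjoining indeterminates only in the sense that passes cleanly: $U(D_i[\{X_\alpha\}])=U(D_i)$ when $D_i$ is reduced, so the trivial-unit condition is in fact stable, but I must verify that the reduced (weak) reduced UFR characterization of Theorem 5.7 truly transfers when the base is an infinite polynomial ring, where one cannot blindly invoke a finite-product structure theorem. The cleanest route is to avoid infinite structure theorems entirely: I would prove $(a)\iff(b)$ directly via Theorems 5.4--5.7, and then prove $(b)\iff(c)\iff(d)$ by the $S[X_\beta]$ reduction together with the preservation of being a UFD, a finite product of UFDs, indecomposability, and reducedness. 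Thus every many-variable assertion collapses to a one-variable assertion over a polynomial base $S$, and the only facts about $S$ I need are that it inherits exactly the structural property named in $(a)$---which is precisely what the classical UFD-stability theorem and the product/idempotent remarks guarantee. Writing this out, I would present part (1) in full detail as the template and then remark that parts (2) and (3) follow by the identical reduction, substituting Theorem 5.4 (resp.\ 5.6, 5.7) for Corollary 5.5, and in part (3) invoking $U(D_i[\{X_\alpha\}])=U(D_i)$ for reduced $D_i$ to preserve the trivial-unit-group hypothesis.
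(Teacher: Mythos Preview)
Your proposal is correct and matches the paper's proof essentially step for step: the paper also uses Corollary 5.5 and Theorems 5.4, 5.6, 5.7 for $(a)\iff(b)$, writes $R[\{X_\alpha\}_{\alpha\in\Lambda}]=R[\{X_\alpha\}_{\alpha\in\Lambda'}][X_{\alpha_0}]$ to reduce $(d)\implies(a)$ to the one-variable case, and in part~(3) invokes exactly the observation $U(D)=U(D[\{X_\alpha\}])$ for an integral domain $D$. Your treatment is slightly more explicit about the preservation facts, but the architecture is identical.
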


\begin{proof}
\
\begin{enumerate}
\item $(a) \iff (b)$ Corollary 5.5 $(b) \implies (c)$ If $R$ is a UFD, so is $R[\{X_{\alpha}\}_{\alpha \in \Lambda}]$. Hence (c) holds. $(c) \implies (d)$. Clear. $(d) \implies (a)$ Let $\alpha_0 \in \Lambda$ and $\Lambda'=\Lambda \backslash \{\alpha_0\}$. So $R[\{X_{\alpha}\}_{\alpha \in \Lambda'}][X_{\alpha_0}]=R[\{X_{\alpha}\}_{\alpha \in \Lambda}]$ is a UFR (resp., is atomic and any two factorizations of nonzero nonunits into irreducibles are homomorphic). By $(a) \iff (b)$, $R[\{X_{\alpha}\}_{\alpha \in \Lambda'}]$ is a UFD. Hence $R$ is a UFD.

\item $(a) \iff (b)$ Theorems 5.4 and 5.6. The other implications follow as in $(1)$ mutatis mutandis. 

\item $(a) \iff (b)$ Theorem 5.7. The other implications follow as in (2) mutatis mutandis and the observation that for any set $\{X_{\alpha}\}_{\alpha \in \Lambda}$ of indeterminates over an integral domain $D$, $U(D)=U(D[\{X_{\alpha}\}_{\alpha \in \Lambda}])$.

\end{enumerate}
\end{proof}

%%%%%%%%%%%%%%1

\section{Polynomial Rings over Unique Factorization Rings}

A polynomial ring over a UFD is again a UFD. This is not the case for the other types of unique factorization rings that we have defined. Suppose that $R$ is a total quotient ring. Then $R$ is certainly factorial, but $R[X]$ need not even be atomic; indeed, $X$ need not be a finite product of atoms (e.g., $R$ an infinite direct product of fields). If $R$ is an SPIR or a quasilocal ring $(R,M)$ with $M^2=0$, then $R$ is factorial, a UFR, a weak UFR, and a [strongly] ($\mu$-) reduced UFR, but $R[X]$ has none of those properties except for the trivial case where $R$ is a field. And if $R$ is an SPIR, then $R$ is a Fletcher UFR, but $R[X]$ is not a Fletcher UFR unless $R$ is a field. This raises the question of what factorization properties $R[X]$ has in the case where $R$ is an SPIR or a quasilocal ring $(R,M)$ with $M^2=0$. 

Recall the following definitions from \cite{AV1}. Let $R$ be a commutative ring. Then $R$ is a \emph{half factorial ring (HFR)} if $R$ is atomic and for any nonzero nonunit $a \in R$, any two atomic factorizations of $a$ have the same length. The ring $R$ is a \emph{bounded factorization ring (BFR)} if (1) $R$ is atomic and (2) for each nonzero nonunit $a \in R$, there is a bound on the length of atomic factorizations of $a$, or equivalently, (3) for each nonzero nonunit $a \in R$, there is a natural number $N(a)$ so that for any factorization of $a$ into nonunits $a=a_1 \cdots a_n$, $n \leq N(a)$. Finally, $R$ is called a \emph{finite factorization ring (FFR)} if every nonzero nonunit of $R$ has only a finite number of factorizations up to order and associates, a \emph{weak finite factorization ring (WFFR)} if every nonzero nonunit of $R$ has only a finite number of nonassociate divisors, and an \emph{atomic idf ring} if $R$ is atomic and each nonzero element of $R$ has at most a finite number of nonassociate irreducible divisors. An HFR, FFR, and a BFR are pr\'esimplifiable. 

Note that the following are equivalent: (1) $R$ is an FFR, (2) $R$ is a BFR and WFFR, (3) $R$ is pr\`esimplifable and a WFFR, (4) $R$ is a BFR and an atomic idf-ring, and (5) $R$ is pr\`esimplifiable and an atomic idf ring \citep[Proposition 6.6]{AV1}. We have the following diagram where none of the implications can be reversed.

\

\vspace{-5mm}
\[
\xymatrix{ 
& \text{ HFR } \ar@{=>}[dr]\\
\text{ UFR } \ar@{=>}[ur] \ar@{=>}[dr] & &\text{ BFR } \ar@{=>}[r] &\text{ ACCP } \ar@{=>}[r] &\text{ atomic }\\
&\text{ FFR} \ar@{=>}[ur] \ar@{=>}[d]\\
& \text{ WFFR } \ar@{=>}[d]\\
& \text{ atomic idf ring }}\\
\] 

\

In Section 3 we remarked that Coykendall and Trentham \cite{CT} gave an example of a zero-dimensional quasilocal ring $S$ with $S[X]$ atomic (or equivalently, very strongly atomic), but $S$ is not atomic. It is easily checked that if $R[X]$ satisfies any of the conditions in the diagram other than being atomic or an atomic idf ring, then so does $R$. Certainly, if $R[X]$ is an idf ring, so is $R$. Now if $R[X]$ is an atomic idf ring, then either $R$ is an integral domain (and hence a FFD) or $R$ is a finite local ring \citep[Theorem 1.7]{AV2}. In either case $R$ is pr\`esimplifiable and hence so is $R[X]$. So for a polynomial ring $R[X]$, the notions of FFR, WFFR, and atomic idf ring coincide. Hence if $R[X]$ is an atomic idf ring, so is $R$.

%%%%%3

Suppose that $(R,M)$ is a quasilocal ring with $M^n=0$ for some $n \geq 1$. Then $R$ is a BFR. We next note that $R[X]$ is a BFR.

\begin{thm}
Suppose that $(R,M)$ is a quasilocal ring with $M^n=0$ for some $n \geq 1$. Let $X$ be an indeterminate over $R$. Then $R[X]$ is a BFR. Hence if $R$ is a UFR, $R[X]$ is a BFR and thus is atomic.
\end{thm}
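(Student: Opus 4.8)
The plan is to verify the equivalent bounded-length form of the BFR condition, namely that for each nonzero nonunit $f \in R[X]$ there is a natural number $N(f)$ bounding the number of nonunit factors in any factorization $f = f_1 \cdots f_k$. Since $(R,M)$ is quasilocal with $M^n = 0$, every element of $M$ is nilpotent, so $M = nil(R)$ and $\kappa := R/M$ is a field; reduction of coefficients gives a ring map $R[X] \to \kappa[X]$, $g \mapsto \bar g$. By Theorem 3.1(1) a polynomial is a unit in $R[X]$ exactly when its constant term is a unit and its higher coefficients lie in $nil(R) = M$; in particular a nonunit $f_i$ with $\bar f_i \neq 0$ cannot reduce to a nonzero constant, so $\deg \bar f_i \ge 1$ for every such factor.

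Given a factorization $f = f_1 \cdots f_k$ into nonunits, I would sort the factors into the set $A$ of indices with $\bar f_i \neq 0$ (hence $\deg \bar f_i \ge 1$) and the set $B$ of indices with $\bar f_i = 0$, i.e. $f_i \in M[X]$. The set $B$ is easy to control: the product $\prod_{i \in B} f_i$ lies in $(M[X])^{|B|} = M^{|B|}[X]$, so if $|B| \ge n$ then this product, and hence $f$, is zero; thus $|B| \le n-1$.

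The main work is bounding $|A|$, and I would show $|A| \le \deg f$. Writing $g = \prod_{i \in A} f_i$ and $h = \prod_{i \in B} f_i$, we have $f = gh$, $\bar g = \prod_{i \in A} \bar f_i \neq 0$ in the domain $\kappa[X]$, and $\deg \bar g = \sum_{i \in A} \deg \bar f_i \ge |A|$. If $B = \emptyset$ this is immediate, since $\deg f \ge \deg \bar f = \deg \bar g \ge |A|$. The obstacle is the case $B \neq \emptyset$: then $h \in M[X]$ forces $\bar f = 0$, so reduction modulo $M$ destroys all degree information about $g$. To recover it I would pass to the $M$-adic filtration. Setting $s$ to be the largest $t$ with $h \in M^t[X]$ (finite since $M^n = 0$), I reduce modulo $M^{s+1}$: the image of $h$ in $(M^s/M^{s+1})[X]$ is nonzero, and because $M \cdot (M^s/M^{s+1}) = 0$ the product $gh \bmod M^{s+1}$ equals $\bar g$ times that image inside the $\kappa[X]$-module $(M^s/M^{s+1})[X]$. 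As $M^s/M^{s+1}$ is a $\kappa$-vector space, this module is free and hence torsion-free over the domain $\kappa[X]$, so multiplication by $\bar g \neq 0$ is injective and raises the top $X$-degree by $\deg \bar g$. Consequently $f = gh$ has a coefficient lying outside $M^{s+1}$, in particular nonzero, in some degree at least $\deg \bar g$, giving $\deg f \ge \deg \bar g \ge |A|$. Concretely this amounts to selecting the highest-degree coefficient of $h$ lying in $M^s \setminus M^{s+1}$ and checking that the matching coefficient of $gh$ survives; this filtration step is the crux of the argument.

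Combining the two bounds yields $k = |A| + |B| \le \deg f + n - 1$, so $N(f) = \deg f + n - 1$ works, and the existence of such a bound gives that $R[X]$ is a BFR (a bound on factorization lengths also forces atomicity, as every maximal-length factorization into nonunits consists of irreducibles). For the final assertion I would invoke the structure theorem for UFRs recalled in Section 5: a UFR is either a UFD, an SPIR, or a quasilocal ring $(R,M)$ with $M^2 = 0$. In the latter two cases $R$ is quasilocal with $M^n = 0$ (an SPIR has nilpotent maximal ideal), so the theorem just proved applies and $R[X]$ is a BFR; if $R$ is a UFD then $R[X]$ is a UFD and hence trivially a BFR. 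In every case $R[X]$ is a BFR, and being a BFR it is atomic.
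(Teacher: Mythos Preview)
Your argument is correct and gives a clean, explicit bound $N(f)=\deg f + n - 1$. The filtration step is the right idea: since $M\cdot(M^s/M^{s+1})=0$, the image of $gh$ in $(M^s/M^{s+1})[X]$ really is $\bar g$ acting on the image of $h$, and because $M^s/M^{s+1}$ is a $\kappa$-vector space the leading coefficient of $\bar g$ times the top surviving coefficient of $h$ cannot vanish. That recovers the degree information lost when $\bar f=0$, and the rest follows.

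The paper does something entirely different: it simply quotes \cite[Theorem 12]{BFR}, which says that if $R$ is a BFR with $0$ primary and $\bigcap_m(\operatorname{nil}R)^m=0$, then $R[X]$ is a BFR, and then observes that a quasilocal ring with $M^n=0$ meets these hypotheses. So the paper's proof is a one-line citation, while yours is a self-contained elementary argument that moreover produces an explicit length bound depending only on $\deg f$ and the nilpotency index $n$. Your approach is more informative here; the paper's has the advantage of invoking a result that applies in somewhat greater generality (it does not require $R$ to be quasilocal, only that $0$ be primary with the nilradical satisfying the intersection condition). For the final ``Hence'' clause both you and the paper rely on the structure theorem for UFRs recalled after Definition~5.3.
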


\begin{proof}
In \citep[Theorem 12]{BFR} it was shown that if $R$ is a BFR with the zero ideal primary and $\bigcap_{m=1}^{\infty} (nil(R))^m=0$, then $R[X]$ is a BFR. Thus if $(R,M)$ is a quasilocal ring with $M^n=0$ for some $n \geq 1$, $R[X]$ is a BFR.

\end{proof}

We next show for $R$ a UFR, $R[X]$ is an HFR only in the trivial case where $R$ is a UFD.

\begin{thm} %thm 6.2
Let $R$ be a commutative ring and $X$ an indeterminate over $R$. Suppose that $R$ has a nonzero nilpotent atom. Then $R[X]$ is a not an HFR. Hence if $R$ is a zero-dimensional quasilocal ring that is not a field (e.g., a UFR that is not a UFD), $R[X]$ is not an HFR.
\end{thm}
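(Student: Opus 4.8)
The plan is to exhibit a single element of $R[X]$ that has two factorizations into irreducibles of different lengths; since an HFR requires \emph{all} atomic factorizations of a given element to have the same length, producing such an element already shows $R[X]$ is not an HFR (and in particular this does not force us to first settle whether $R[X]$ is atomic). Fix a nonzero nilpotent atom $a$ and let $n\ge 2$ be least with $a^n=0$. First I would note that the hypothesis forces $R$ to be indecomposable: if $R=R_1\times\cdots\times R_k$ with $k\ge 2$, then by Theorem 2.1 (7) an atom has exactly one non-unit coordinate, whereas a nilpotent element has every coordinate nilpotent, and a coordinate that is simultaneously a unit and nilpotent cannot occur; hence $k=1$. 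By Theorem 4.1 this makes $X$ irreducible, and since $X\mapsto X-c$ is an automorphism of $R[X]$, every $X-c$ with $c\in R$ is irreducible as well.

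The elementary fact I would isolate next is that, because $a$ is an atom, neither $a$ nor $-a$ is a square in $R$: if $a=v^2$ then $v$ is nilpotent (so a non-unit) and $a=v\cdot v$ expresses $a$ as a product of two non-units, contradicting irreducibility, and similarly for $-a$. This is exactly what yields the irreducibility I need. Indeed $X^2+a$ has unit leading coefficient, hence is regular, so any factorization into non-units must have both factors of positive degree with unit leading coefficients (a nonunit constant factor would make the top coefficient nilpotent, not $1$); after absorbing units such a factorization has the shape $(X+v)(X+w)$ with $v+w=0$ and $vw=a$, forcing $-a=v^2$, which is impossible. Thus $X^2+a$ is irreducible, and symmetrically $X^2-a$ is irreducible. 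In the cleanest case $a^2=0$ I would then simply compute
\[
X^4=(X^2+a)(X^2-a)=X\cdot X\cdot X\cdot X,
\]
the left equality holding since $(X^2+a)(X^2-a)=X^4-a^2=X^4$. The middle expression is a factorization of $X^4$ into two irreducibles and the right-hand side is a factorization into four irreducibles, so $X^4$ has atomic factorizations of lengths $2$ and $4$ and $R[X]$ is not an HFR.

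For a nilpotent atom of higher index the analogous device is the telescoping identity
\[
X^{n}=(X+a)\bigl(X^{n-1}-aX^{n-2}+\cdots+(-a)^{n-1}\bigr),
\]
valid because the product collapses to $X^{n}-(-a)^{n}=X^{n}$; comparing it with $X^{n}=X\cdots X$ ($n$ factors) again gives factorizations of unequal length (namely $2$ versus $n$), \emph{provided} the cofactor does not itself break into $n-1$ irreducibles. I expect this to be the main obstacle: one must show that the cofactor is genuinely irreducible (equivalently, that the relevant quadratics built from the square-zero power $a^{n-1}$, such as $X^2\pm a^{n-1}$, are irreducible), and this is precisely where the ``$a$ an atom $\Rightarrow$ $\pm a$ non-squares'' lemma is invoked and where some care with the residue data of $R$ is needed. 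Finally, for the stated corollary I would observe that a zero-dimensional quasilocal ring $(R,M)$ which is a UFR but not a UFD is an SPIR or satisfies $M^2=0$; in the latter case every nonzero element of $M$ is a nilpotent atom with square zero, so the clean $a^2=0$ construction above applies verbatim.
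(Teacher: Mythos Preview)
Your $a^{2}=0$ case---$X^{4}=(X^{2}+a)(X^{2}-a)$ versus $X\cdot X\cdot X\cdot X$---is exactly the paper's base case, but two steps need more care. First, ``$a=v\cdot v$ with $v$ a nonunit contradicts irreducibility'' is not literally true in this paper's sense: irreducibility of $a$ only yields $a\sim v$, not that $v$ is a unit. You then need the extra line $v=sa\Rightarrow a=-v^{2}=-s^{2}a^{2}=0$. Second, ``after absorbing units the factorization has shape $(X+v)(X+w)$'' overlooks factors of degree $>1$ with nilpotent leading coefficients; over rings with zero divisors degrees need not add, so $\deg(fg)=2$ does not force $\deg f=\deg g=1$. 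The paper handles this by passing to $\bar R=R/\operatorname{nil}(R)$ (reduced, indecomposable) and invoking Corollary~4.6: every nonunit factor of $\overline{X^{2}+a}=X^{2}$ is $uX$, so each factor of $X^{2}+a$ is $uX$ plus a nilpotent polynomial, and a constant-term comparison finishes.

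The genuine gap is the higher-index case. Your telescoping $X^{n}=(X+a)\bigl(X^{n-1}-aX^{n-2}+\cdots+(-a)^{n-1}\bigr)$ does not produce unequal lengths: modulo $\operatorname{nil}(R)$ the cofactor becomes $X^{n-1}$, so it can split into as many as $n-1$ atoms, and your ``length $2$'' factorization may refine to length $n$, matching $X\cdots X$. The suggested rescue via irreducibility of $X^{2}\pm a^{n-1}$ fails because $a^{n-1}$, while square-zero, is generally \emph{not} an atom (e.g.\ $2^{2}=4$ in $\mathbb{Z}_{8}$), so the atom hypothesis that drives the base case is unavailable. The paper's device is different: choose $N$ with $b^{2^{N-1}}=0$ and iterate the difference of squares,
\[
X^{2^{N}}=X^{2^{N}}-b^{2^{N-1}}=(X^{2}-b)(X^{2}+b)(X^{4}+b^{2})\cdots(X^{2^{N-1}}+b^{2^{N-2}}).
\]
Each $X^{2^{k}}+b^{2^{k-1}}$ with $k\ge 2$ contributes at most $2^{k}$ atoms (by the $\bmod\ \operatorname{nil}(R)$ bound), but the two bottom factors $X^{2}\pm b$ are irreducible precisely because $b$ itself is an atom, contributing $2$ atoms rather than the $4$ one would get from linear factors. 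The total is at most $2+4+\cdots+2^{N-1}=2^{N}-2<2^{N}$, against the length-$2^{N}$ factorization $X^{2^{N}}=X\cdots X$. This also covers the SPIR branch of the corollary (uniformizer of index $>2$), which your $a^{2}=0$ argument alone does not.
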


%%%%4

\begin{proof}
We may suppose that $R$ is indecomposable for otherwise $R[X]$ is not an HFR. Let $m$ be a natural number. Let $b \in nil(R)$. We first note that $X^m+b$ is a product of atoms and any atomic factorization of $X^m+b$ has at most $m$ factors. For let $X^m+b=f_1 \cdots f_n$ where $f_i \in R[X]$ is a nonunit. Pass to $\bar{R}=R/nil(R)$. So $\bar{R}$ is an indecomposable reduced ring. Then $X^m=\overline{X^m+b}=\bar{f_1}\cdots \bar{f_n}$. Since $f_i$ is a nonunit, so is $\bar{f_i}$. By Corollary 4.6 $n \leq m$. Thus $X^m+b$ is a product of atoms and any atomic factorization of $X^m+b$ has at most $m$ factors. 

Suppose further that $b$ is an atom, then $X^m+b$ is actually irreducible. For let $X^m+b=f_1 \cdots f_n$ where each $f_i \in R[X]$ is irreducible. Since $b$ is an atom exactly one $f_i$ has $f_i(0)$ a nonunit. Suppose that $f_j(0)$ is a unit. Then $\overline{f_j(0)}$ is a unit and $\bar{f_j}$ is a factor of $X^m$, so again by Corollary 4.6, $\bar{f_j}$ is a unit and so $f_j$ is a unit. So $X^m+b$ is an atom. Choose $n \geq 1$ with $b^{2^{n-1}}=0$. Then $X^{2^n}=X^{2^n}-b^{2^{n-1}}=(X^{2^{n-1}}+b^{2^{n-2}})(X^{2^{n-2}}+b^{2^{n-3}})\cdots(X^4+b^2)(X^2+b)(X^2-b)$. Since $R$ is indecomposable, $X$ is an atom. Hence $X^{2^n}$ is a product of $2^n$ atoms. Factoring $X^{2^{n-1}}+b^{2^{n-2}}, \cdots, X^4+b^2$ into atoms and noting that $X^2+b$ and $X^2-b$ are both atoms we see that $X^{2^n}=X^{2^n}-b^{2^{n-1}}$ has an atomic factorization with at most $2^{n-1}+2^{n-2}+\cdots+4+2 < 2^n$ atoms. Thus $R[X]$ is not a HFR.

\end{proof}

For other examples of non-unique factorization in the SPIR $\mathbb{Z}_{p^n}$ for $n \geq 2$, see \cite{FF}.\\

We end by noting that for $R$ a UFR that is not an integral domain (or equivalently a UFD), $R[X]$ is an FFR if and only if $R$ is finite.

\begin{thm}
Let $R$ be a commutative ring and $X$ an indeterminate over $R$. Then $R[X]$ is an FFR if and only if either (1) $R$ is an FFD or (2) $(R,M)$ is a finite local ring satisfying (a) for $x_1,\hdots,x_k \in M \backslash M^2$, $x_1\cdots x_k \in M^k \backslash M^{k+1}$ for $1 \leq k \leq n-1$ (or equivalently, $x \in M^i \backslash M^{i+1}$ and $y \in M^j \backslash M^{j+1}$ $\implies xy \in M^{i+j} \backslash M^{i+j+1}$  for $i+j < n$) where $M^n=0$, but $M^{n-1}\neq0$ and (b) $aM=M^2$ for $a \in M \backslash M^2$. Thus if $R$ is a UFR, $R[X]$ is an FFR if and only if $R$ is a UFD or $R$ is finite. 
\end{thm}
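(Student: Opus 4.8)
The plan is to funnel the whole statement through two facts already in hand: that for a polynomial ring the notions of FFR, WFFR, and atomic idf ring coincide (remarked above), and that an atomic idf polynomial ring $R[X]$ forces $R$ to be either an integral domain or a finite local ring (recorded above). Thus ``$R[X]$ is an FFR'' may be read as ``$R[X]$ is a WFFR,'' and the dichotomy is automatic. In the integral-domain case $R[X]$ is an FFR if and only if it is an FFD, which is standard to hold if and only if $R$ is an FFD, giving alternative (1); this also settles the forward implication there. The entire remaining content is the finite local case, where I would first install two simplifications: by Theorem 3.2(6), since $0$ is primary in a local ring, $R[X]$ is pr\`esimplifiable; and by Theorem 6.1, since $M$ is nilpotent, $R[X]$ is a BFR. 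Hence for finite local $R$, being an FFR is the same as being a WFFR, and---because $R$ is finite---this is the same as: every nonzero nonunit $f$ has divisors of bounded degree among associate-class representatives (finitely many polynomials of each bounded degree then give finitely many classes).

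For sufficiency I would exploit the order function $v(r)=\max\{i : r\in M^i\}$ on $R\setminus\{0\}$. Condition (a) is exactly the statement that $v$ is additive, $v(rs)=v(r)+v(s)$ when this is $<n$ and $rs=0$ otherwise, i.e. the associated graded ring is a domain in degrees below $n$. Lifting $v$ to a Gauss-type invariant on $R[X]$ by recording, for $f=\sum c_iX^i$, the minimal value $\nu(f)=\min_i v(c_i)$ together with the largest index attaining it, condition (a) makes this leading-form datum multiplicative; so in any factorization $f=gh$ the leading index of $g$ is controlled by that of $f$. What remains is the higher-degree ``nilpotent tail'' of a factor---the phenomenon behind divisors such as $X+tc$ of $tX$---and here condition (b), forcing $aM=M^2$ for $a\in M\setminus M^2$, is used to show that only finitely many tails can occur up to associates. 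Together these bound $\deg g$ in terms of $f$, and finiteness of $R$ then yields only finitely many nonassociate divisors, i.e. WFFR.

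For necessity I would argue contrapositively, exhibiting, whenever (a) or (b) fails, a single nonzero nonunit of $R[X]$ with infinitely many nonassociate divisors. The template is the direct computation that in $\bigl(F[t]/(t^2)\bigr)[X]$ the element $tX$ has the pairwise nonassociate divisors $X+tc$: a degeneracy in the multiplication on $M$ lets one attach an unbounded family of inequivalent nilpotent tails to a fixed factor, and over a finite ring this family must grow in degree. A failure of (a), say $x_1\cdots x_k\in M^{k+1}$ with each $x_i\in M\setminus M^2$, supplies precisely such a degeneracy and produces divisors of unbounded degree; a failure of (b), an $a\in M\setminus M^2$ with $aM\subsetneq M^2$ together with a $b\in M^2\setminus aM$, does the same. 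In each case the divisors fall into infinitely many associate classes, contradicting WFFR. The closing remark for UFRs is then immediate: a UFD is an FFD, so alternative (1) applies; an SPIR or a quasilocal ring with $M^2=0$ satisfies (a) and (b) automatically, so alternative (2) collapses to the requirement that $R$ be finite.

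The hard part is the finite local case, and within it the sufficiency direction---proving that (a) and (b) really do bound divisor degrees. The additivity coming from (a) only pins down the leading $M$-adic level of a factor; ruling out arbitrarily long nilpotent tails, and seeing that (b) is exactly the extra hypothesis that collapses those tails into finitely many associate classes, is the delicate step. The necessity constructions are more mechanical once the correct degeneracy is isolated, but producing, for each failure of (a) or (b), an explicit element with provably infinitely many nonassociate divisors will demand the same careful bookkeeping of $M$-adic levels and degrees.
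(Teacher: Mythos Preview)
Your overall strategy is sound, but it diverges sharply from the paper's proof in the finite local case. The paper does \emph{not} reprove that case from first principles: after invoking the dichotomy from \cite[Theorem 1.7]{AV2} (integral domain or finite local ring) and the standard FFD result \cite[Proposition 5.3]{AAZ} for the domain case, it simply cites Alan \cite[Theorem 17]{Alan}, which says that for a finite local ring $(R,M)$ with $M^n=0$, $M^{n-1}\neq 0$, the polynomial ring $R[X]$ is an FFR if and only if (a$'$) products of $k<n$ atoms lie in $M^k\setminus M^{k+1}$ and (b$'$) every element of $M\setminus M^2$ divides all of $M^2$. The only work the paper does is the easy translation (a)$\Leftrightarrow$(a$'$), (b)$\Leftrightarrow$(b$'$), noting that under (b) the atoms are exactly $M\setminus M^2$.

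What you are proposing---the $M$-adic order function, the Gauss-type leading-form invariant made multiplicative by (a), the tail analysis controlled by (b), and the contrapositive constructions for necessity---is essentially an outline of how one would \emph{prove} Alan's theorem. That is a legitimate and more self-contained route, and your reductions (pr\`esimplifiability of $R[X]$ via Theorem 3.2(6), BFR via Theorem 6.1, hence FFR $=$ WFFR $=$ bounded divisor degree over a finite coefficient ring) are correct. But be aware that the ``delicate step'' you flag---showing (b) collapses nilpotent tails to finitely many associate classes---and the explicit infinite-divisor constructions when (a) or (b) fails are the entire substance of the cited result, and your sketch does not yet carry them out. If your goal is to match the paper, a one-line citation to \cite[Theorem 17]{Alan} replaces all of that work; if your goal is a self-contained argument, you are on the right track but the hard analysis remains to be written.
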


\begin{proof}
First suppose that $(R,M)$ is a finite local ring where $M^n=0$, but $M^{n-1} \neq 0$. Then \citep[Theorem 17]{Alan} $R[X]$ is an FFR if and only if $R$ satisfies $(a')$ if $r_1,\hdots, r_k$ are atoms of $R$ where $k<n$, then $r_1\cdots r_k \in M^k \backslash M^{k+1}$ and $(b')$ every element of $M \backslash M^2$ divides all elements of $M^2$. Clearly $(b)$ and $(b')$ are equivalent. Also $(b)$ implies that $r \in R$ is an atom if and only if $r \in M \backslash M^2$. So $(a)$ and $(a')$ are equivalent. Also, note that $x \in M^k \backslash M^{k+1}$ for $1 \leq k \leq n-1$ if and only if $x$ is a product of $k$ atoms. So $(a)$ is also equivalent to the condition: $x \in M^i \backslash M^{i+1}$ and $y \in M^j \backslash M^{j+1}$ where $i+j < n$ $\implies$ $xy \in M^{i+j} \backslash M^{i+j+1}$. 

\noindent $(\Rightarrow)$ Suppose that $R[X]$ is an FFR. By \citep[Theorem 1.7]{AV2}, $R$ is either an integral domain (and hence a FFD) or $R$ is a finite local ring. But if $R$ is a finite local ring, then $R$ satisfies (a) and (b) by the remarks of the previous paragraph.

\noindent $(\Leftarrow)$ If $R$ is an FFD, then it is well known that $R[X]$ is an FFD \citep[Proposition 5.3]{AAZ}. If $R$ is a finite local ring satisfying (a) and (b), $R[X]$ is an FFR by the remarks of the first paragraph of the proof.

The last statement is now immediate since a quasilocal ring $(R,M)$ with $M^2=0$ or $R$ an SPIR clearly satisfies (a) and (b). (This is remarked in the paragraph after \citep[Theorem 17]{Alan}.)

\end{proof}

Note that if $R$ is one of the types of ``unique factorization rings" that are not indecomposable, then $R$ and $R[X]$ have nontrivial idempotents and hence are not BFRs, let alone HFRs and FFRs.

\

\

\end{document}